\documentclass[english,12pt,a4paper]{amsart}
\usepackage[T1]{fontenc}
\usepackage{lmodern}
\usepackage[width=14cm, height=21cm]{geometry}
\usepackage{graphicx}
\usepackage{mathtools}
\usepackage{amssymb}
\usepackage{amsthm}
\usepackage{amsmath}
\usepackage{babel}
\usepackage{hyperref, makecell}
\usepackage{enumerate}

\theoremstyle{plain}
\newtheorem{theorem}{Theorem}[section]

\newtheorem{lemma}{Lemma}[section]
\newtheorem{remark}{Remark}

\begin{document}
\title[Modular equation and Eisenstein series identities]{On a new modular equation of degree five and Eisenstein series identities of associated levels}
	
	\author{Shruthi C. Bhat}
	\address{Shruthi C. Bhat, Manipal Institute of Technology, Manipal Academy of Higher Education, Manipal-576104, Karnataka, India.}
	\email{shruthi.research1@gmail.com}

	\author{B. R. Srivatsa Kumar}
	\address{B. R. Srivatsa Kumar, Manipal Institute of Technology, Manipal Academy of Higher Education, Manipal-576104, Karnataka, India.}
	\email{srivatsa.kumar@manipal.edu}
	
	\maketitle
	
	\begin{abstract}
		In this research article, we obtain few theta function identities of level ten employing Ramanujan's $_1 \psi_1$ summation formula. Using these identities, we derive a new modular equation of degree five.  Further, we establish Eisenstein series identities of level ten using Bailey's very-well poised $_6 \psi_6$ summation formula.
	\end{abstract}

		\noindent\textbf{2020 Mathematics Subject Classification:} 11M36, 14H42, 11B65, 11F03. \\

		\noindent\textbf{Keywords:} modular equation, Eisenstein series, theta function.
		
	\section{Introduction}
	Throughout the sequel, let $q \in \mathbb{C}$, in such a way that $ |q| < 1$. Then $q$-Pochhammer symbol or $q$-shifted factorial \cite{Berndt1991} is customarily defined as follows: For $b \in \mathbb{C}$ and $m \in \mathbb{N}$,
	\begin{align*}
		(b;q)_m:= \prod_{k=1}^{m}(1-b q^{k-1}), 
	\end{align*}
	and 
	\begin{align*} 
		(b;q)_\infty:= \prod_{k=0}^{\infty}(1-b q^{k}).
	\end{align*} For brevity, the following notation is employed:
	\begin{align*}
		(b_1, b_2, \ldots, b_n;q)_\infty = \prod_{j=1}^{n}(b_j;q)_\infty.
	\end{align*}
	Ramanujan's general theta function \cite[p. 35, Entry 19]{Berndt1991} can be given by
	\begin{align}\label{t3}
		f(r,s)=\sum_{n=-\infty}^{\infty} r^{n(n+1)/2}s^{n(n-1)/2} =(-r, -s, rs; rs)_\infty, \quad |rs|<1.
	\end{align} The following are the theta functions \cite[ p. 36, Entry 22 (i)-(iii)]{Berndt1991} arising from $f(r,s)$:
	\begin{align}
		f(q, q)&= \frac{(-q;-q)_\infty}{(q;-q)_\infty}=\sum_{n=-\infty}^{\infty}q^{n^{2}}=:\varphi(q), \label{t1}\\
		f(q,q^3)&= \frac{(q^2;q^2)_\infty}{(q;q^2)_\infty}= \sum_{n=0}^{\infty} q^{\frac{n(n+1)}{2}}=:	\psi(q)\label{t2}
	\end{align}
	and \begin{align}
		f(-q, -q^2)&=(q;q)_\infty= \sum_{n=-\infty}^{\infty}(-1)^n q^{\frac{n(3n-1)}{2}} =:  f(-q). \label{t4}
	\end{align}
	For brevity, we define $f_n := f(-q^n).$
	Also after Ramanujan, 
	\begin{align}
		\chi(-q) := (q;q^2)_\infty.
	\end{align} 
	\begin{remark}\label{tf} From the above, the following can be obtained with ease.
		\begin{equation*}
			\phi(-q)= \dfrac{f_1^2}{f_2}, \qquad \phi(q)= \dfrac{f_2^5}{f_1^2f_4^2}, \qquad \psi(-q)= \dfrac{f_1 f_4}{f_2}, 
		\end{equation*}
		\begin{equation*}
			\psi(q)= \dfrac{f_2^2}{f_1}, \qquad f(q)=\dfrac{f_2^3}{f_1 f_4}.
		\end{equation*}
	\end{remark}
	
	\noindent $_{s+1}F_s(z)$ is the generalized hypergeometric function \cite{Rainville1971} which is defined by 
	
	\begin{equation}\label{1.1}
		_{s+1}F_s\left[
		\begin{array}{ll}
			\makecell{x_{0}, x_{1}, \ldots,x_{s} \\ y_{1}, y_{2}, \ldots, y_{s}}
			\quad;  z
		\end{array}
		\right]
		=\sum_{m=0}^{\infty}\frac{(x_0)_m(x_1)_m...(x_s)_m }{(y_1)_m(y_2)_m \ldots (y_s)_m}\frac{z^m}{m!}, \qquad |z|<1,
	\end{equation}
	where $(\alpha)_m$ is known as Pochhammer's symbol which is given by
	\begin{equation*}
		(\alpha)_m = \begin{cases}\alpha(\alpha+1)\ldots(\alpha+m-1)&; m\in \mathbb{N} \\
			1&; m=0.
		\end{cases} 
	\end{equation*} This is closely related to gamma function as follows:
	\begin{equation*}
		(\alpha)_m=\frac{\Gamma(\alpha+m)}{\Gamma(\alpha)}.
	\end{equation*}
	In \eqref{1.1}, $x_0,~x_1,~\ldots,~x_s,~y_1,~y_2,~\ldots,~y_s$ are any complex numbers, such that, none of $\{y_j\}_{j=1}^s$ is a non-positive integer. The above series $_{s+1}F_s$ converges absolutely for $|z|< 1$ by D'Alembert ratio test. It is evident that $_{s+1}F_s(z)$ appears in many theoretical and real-world contexts such as theoretical physics, statistics, mathematics, and engineering. 
	
	\noindent The Eisenstein series of level $2k$ is defined as
	\begin{equation*}
		E_{2k}(\tau)= 1 - \frac{4k}{\mathfrak{B}_{2k}} \sum\limits_{n=1}^{\infty} \frac{n^{2k-1}q^n}{1-q^n}
	\end{equation*} where $q \in \mathbb{C}$  with $q=e^{2\pi i \tau},\,\, Im(\tau) >0$ and $B_k$ is the Bernoulli number of $k^{th}$ order defined by
	\begin{equation*}
		\frac{x}{e^x - 1} = \sum \limits_{k=0}^{\infty} \frac{\mathfrak{B}_k x^k}{k!}.
	\end{equation*} Note that 	$\mathfrak{B}_0 = 1$, $\mathfrak{B}_1=-1/2$, $\mathfrak{B}_2=1/6$, $\mathfrak{B}_3=0$, $\mathfrak{B}_4 = -1/30$, $\ldots$ $\mathfrak{B}_{2k+1}=0$, for $k\geq 1$, $\mathfrak{B}_k \in \mathbb{Q}$.	$L(q)$, $M(q)$ and $N(q)$ are the famous Ramanujan's Eisenstein series that are defined by
	\begin{equation*}\label{P}
		L(q):=	E_2(\tau)  = 1- 24 \sum \limits_{m=1}^{\infty} \frac{m q^m}{1-q^m},
	\end{equation*}
	\begin{equation*}
		M(q) :=	E_4(\tau) =  1+ 240 \sum \limits_{m=1}^{\infty} \frac{m^3 q^m}{1-q^m}
	\end{equation*}
	and
	\begin{equation*}
		N(q) := 	E_6(\tau) = 1-504 \sum \limits_{m=1}^{\infty} \frac{m^5 q^m}{1-q^m}.
	\end{equation*}
	For brevity, we define
	\begin{equation}\label{ESD}
		L_n:=L(q^n) = 1- 24 \sum \limits_{m=1}^{\infty} \frac{m q^{nm}}{1-q^{nm}}.
	\end{equation} For details on Eisenstein series one may refer \cite{Apostol1976, Rankin1977}. Ramanujan's contributions \cite{Ramanujan1916}, \cite[pp. 136--162]{Ramanujan2000} to the theory of Eisenstein series is significant, which has applications in the theory of  the partition function, the divisor functions, the number of different ways of expressing a natural number irrespective of the order of summands and series representation for $1/\pi$.
	We define modular equation here.  An equation relating $\gamma$ and $\delta$ induced by 
	\begin{equation*}
		n\frac{_2F_1\left(\dfrac{1}{2}, \dfrac{1}{2}; 1; 1-\gamma\right)}{_2F_1\left(\dfrac{1}{2}, \dfrac{1}{2}; 1; \gamma\right)}=\frac{_2F_1\left(\dfrac{1}{2}, \dfrac{1}{2}; 1; 1-\delta\right)}{_2F_1\left(\dfrac{1}{2}, \dfrac{1}{2}; 1; \delta\right)},
	\end{equation*} is known as the modular equation of degree $n$ in the theory of signature two. In the above relation, $\delta$ carries $n^{\text{th}}$ degree over
	$\gamma$, where
	$	m:=\frac{z_1}{z_n}$
	is the multiplier, 
	and $z_1=~_2F_1\left(\dfrac{1}{2}, \dfrac{1}{2}; 1; \gamma\right)$ and $z_n=~_2F_1\left(\dfrac{1}{2}, \dfrac{1}{2}; 1; \delta\right)$.
	
	The first known modular equation  is due to Legendre \cite{Legendre1825}, that is stated as follows:
	If $\beta$ has degree 3 over $\alpha$, then
	\begin{align*}
		(\alpha \beta)^{1/4} + {(1-\alpha) (1-\beta)}^{1/4} =1.
	\end{align*}
	
	\noindent Jacobi \cite{Jacobi1829} discovered modular equations of degree 3 and 5. Later, following the works of Legendre and Jacobi, many mathematicians  contributed for the theory of modular equation. For more details one may refer \cite{Berndt2000, Hanna1928, Ramanujan1914}.
	
	\noindent   Bailey's \cite{Bailey1936} very-well poised $_6 \psi_6$ summation formula is given by
	\begin{align}\label{B6p6}
		\sum_{k=-\infty}^{\infty} &\dfrac{(qP^{1/2};q)_k (-qP^{1/2};q)_k (Q;q)_k (R;q)_k (S;q)_k (T;q)_k}{(P^{1/2};q)_k (-P^{1/2};q)_k (Pq/Q;q)_k (Pq/R;q)_k (Pq/S;q)_k (Pq/T;q)_k} \left(\dfrac{qP^2}{QRST}\right)^k  \nonumber \\
		& = \dfrac{(Pq, Pq/QR, Pq/QS, Pq/QT, Pq/RS, Pq/RT, Pq/ST, q, q/P;q)_\infty}{(Pq/Q, Pq/R, Pq/S, Pq/T, q/Q, q/R, q/S, q/T, qP^2/QRST;q)_\infty}.
	\end{align}
	Ramanujan documented numerous modular equations of various degrees.  In his Notebook \cite{Berndt1991}, Chapter 19 deals with the modular equations of degrees 3, 5 and 7 and also the theta function identities associated to those. According to Berndt \cite{Berndt1991}, for each particular degree, Ramanujan appeared to first have derived a set of wonderful identities consisting of theta functions of certain arguments which were then used to obtain a fascinating set of modular equations of the corresponding degree (order). But, Berndt appeared to have reversed this custom and proved theta function identities employing modular equations. Berndt used to employ three different methods to  establish modular equations documented by Ramanujan, first was by the Ramanujan's theory of theta functions, second by the method of parameterization and third by the theory of modular forms. Shen \cite{Shen1994} and Baruah and Barman \cite{Baruah_2006} used the concept of theta functions to establish modular equations of degree three documented by Ramanujan. Shen \cite{Shen1995}, Liu \cite{Liu2001} and Baruah et al. \cite{Baruah_2012} also used the theory of theta functions to obtain the proofs of modular equations of degree five recorded by Ramanujan. Liu \cite{Liu2010} obtained certain nice theta function identities associated to modular equations of degree seven, by employing the complex variable theory of elliptic functions. Vasuki and Veeresha \cite{Vasuki_2015} established modular equations of degree seven using theory of theta functions. Barman and Baruah \cite{Barman_2010} obtained alternate proofs of Ramanujan's theta function identities related to the modular equations of composite degrees three, five and fifteen. They also gave the elementary proofs of certain modular equations of degree fifteen. Recently, Vasuki and Yathirajsharma \cite{Vasuki_2024} reproved a set of modular equations of degree eleven by employing elementary algebraic techniques and the method of parametrization. This was the first set of modular equations, the proofs of which by Berndt \cite{Berndt1991} moved to theory of modular forms from elementary algebraic procedures. Modular equations of degrees eleven, thirteen, seventeen, nineteen, twenty three, thirty one, forty seven, and seventy one were established in Chapter 20 of Ramanujan's Notebook \cite{Berndt1991}, which also involved the study of modular equations of composite degree or ``mixed'' modular equations. The modular equations of composite degrees 3 and 9 were documented in Chapter 20, which were established by Berndt with the help of parametrization. The modular equations of various degrees and theta function identities of various levels are used to obtain various beautiful Eisenstein series identities. Wonderful work on this context can be found in the literature. Liu \cite{Liu2003} established few Eisenstein series identities using Ramanujan's modular equations of degree seven, through complex variable theory of elliptic functions. Cooper and Ye \cite{Cooper_2016} established Eisenstein series identities of level fourteen. Vasuki and Veeresha \cite{Vasuki_2016} established Ramanujan's Eisenstein series of level seven and fourteen through elementary approach by the help of $_6 \psi _6$ summation formula.
	
	Motivated by the work in literature, we derive theta function identities of level ten using Ramanujan's $_1 \psi_1$ summation formula. Utilizing these, we establish modular equation of degree five. Also, we obtain Eisenstein series of associated levels with the help of Bailey's very-well poised $_6 \psi_6$ summation formula.
	\section{Preliminaries}
	We list some basic results   in this section, that are needed in proving our main results.
	\begin{lemma}\label{LemmaA}
		If $xy=zw$, then we have the following:
		\begin{enumerate}[(i)]
			\item $	f(x,y) f(z,w) + f(-x,-y) f(-z,-w) = 2f(xz,yw) f(xw, yz).$
			\item $f(x,y) f(z,w) - f(-x,-y) f(-z,-w) = 2x f(y/z,xz^2w) f(y/w, xzw^2).$
			\item $f(x,y) f(z,w) f(xn,y/n) f(zn,w/n)-f(-x,-y) f(-z,-w)f(-xn,-y/n)\\ \times f(-zn,-w/n)
			= 2x f(z/x,xw) f(w/xn,xzn) f(n,xy/n) \psi(xy).$			
		\end{enumerate}
	\end{lemma}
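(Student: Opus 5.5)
Parts (i) and (ii) are the standard Schröter-type product formulas for $f(a,b)$ (Berndt, Entry 29 of Chapter 16). I would prove them by comparing double series, and then deduce (iii) by applying (i) and (ii) twice.

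\textbf{Step 1: expand the products.} By the series definition \eqref{t3},
\[
f(x,y)f(z,w)=\sum_{m,n} x^{m(m+1)/2}y^{m(m-1)/2}z^{n(n+1)/2}w^{n(n-1)/2}.
\]
Replacing $(x,y,z,w)$ by their negatives multiplies each term by $(-1)^{m+n}$, because $m(m+1)/2+m(m-1)/2=m^2\equiv m \pmod 2$. Hence the sum of the two products retains exactly the terms with $m+n$ even, each doubled. The difference retains the terms with $m+n$ odd, each doubled.

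\textbf{Step 2: the case $m+n$ even, giving (i).} Write $m=u+v$ and $n=u-v$ with $(u,v)\in\mathbb Z^2$; this is a bijection onto the pairs with $m\equiv n$. Expanding the exponents, the exponent of each of $x,y,z,w$ becomes a quadratic in $u$ plus a quadratic in $v$, together with a cross term $uv$. The cross terms are $x^{uv}y^{uv}z^{-uv}w^{-uv}=(xy/zw)^{uv}=1$, since $xy=zw$. So the double sum factors as a sum over $u$ times a sum over $v$. Checking exponents:
\begin{itemize}
\item the $u$-sum is $\sum_u (xz)^{u(u+1)/2}(yw)^{u(u-1)/2}=f(xz,yw)$;
\item the $v$-sum is $f(xw,yz)$, with $x,w$ receiving the $v(v+1)/2$ weight.
\end{itemize}
This gives (i).

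\textbf{Step 3: the case $m+n$ odd, giving (ii).} Write $m=u+v+1$ and $n=u-v$, then factor in the same way, again using $xy=zw$ to kill the cross terms. The shift produces the prefactor $x$ and the arguments $f(y/z,xz^2w)$ and $f(y/w,xzw^2)$. Identifying these requires completing the square and re-indexing $u\mapsto -u$ or $u\mapsto u-1$ where needed. I would also use $f(a,b)=f(b,a)$ and $f(a,b)=af(a^{-1},a^2b)$ to normalize. I expect this bookkeeping of shifted exponents to be the main obstacle, though it is purely mechanical.

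\textbf{Step 4: deduce (iii).} Set $A=f(x,y)f(z,w)$, $B=f(xn,y/n)f(zn,w/n)$, and let $A^-,B^-$ be the corresponding sign-flipped products. Then
\[
AB-A^-B^-=\tfrac12\bigl[(A+A^-)(B-B^-)+(A-A^-)(B+B^-)\bigr].
\]
Both pairs satisfy the hypothesis of (i) and (ii), since $(xn)(y/n)=(zn)(w/n)=xy=zw$. Applying (i) and (ii) to each factor turns the right-hand side into a sum of two products of four theta functions. Some of these share arguments: for example, (i) gives the factor $f(xz,yw)$ from $A$, and $f(xzn^2,yw/n^2)$ from $B$.

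I would then recombine each resulting sum using (i) or (ii) once more, with new variables chosen so that the product condition holds. This should collapse everything into the single product $2x f(z/x,xw)f(w/xn,xzn)f(n,xy/n)\psi(xy)$. Here I would use $\psi(q)=f(q,q^3)$ together with $f(1,a)=2f(a,a^3)$ to produce the factor $\psi(xy)$.

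If the algebraic recombination in Step 4 proves unwieldy, I would fall back on a uniqueness argument. Both sides of (iii), viewed as functions of $n$, satisfy the same quasi-periodicity under $n\mapsto n\,xy$. Both also vanish where $f(n,xy/n)$ vanishes. Comparing a single coefficient then fixes the constant.
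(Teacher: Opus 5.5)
The paper gives no proof of this lemma. It cites Entry 29 of Chapter 16 in Berndt's book for (i)--(ii) and Adiga et al.\ for (iii). Your self-contained route is therefore genuinely different, and Steps 1--3 are the standard, correct Schr\"oter-type argument.

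Step 3 is easier than you expect. With $m=u+v+1$, $n=u-v$ the double sum factors directly as $2x\,f(x^2yz,w/x)\,f(x^2yw,z/x)$. The relation $xy=zw$ then gives $x^2yz=xz^2w$, $w/x=y/z$, $x^2yw=xzw^2$ and $z/x=y/w$, so no completing the square or re-indexing is needed.

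Step 4 carries all the content of (iii), and as written it is only a hope. Your example of shared factors is also wrong: $f(xz,yw)$ and $f(xzn^2,yw/n^2)$ are different functions. The factors that actually recur are those invariant under $(x,y,z,w)\mapsto(xn,y/n,zn,w/n)$. Thus $f(xw,yz)$ occurs in both $A+A^-$ and $B+B^-$, and $f(y/w,xzw^2)$ occurs in both $A-A^-$ and $B-B^-$. Hence
\[
AB-A^-B^-=2x\,f(xw,yz)f(y/w,xzw^2)\Bigl[f(y/z,xz^2w)f(xzn^2,yw/n^2)+n\,f(xz,yw)f\bigl(y/(zn^2),xz^2wn^2\bigr)\Bigr].
\]
Two further applications of your own identities finish the proof.

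\textbf{The prefactor.} Apply (i) with $(x,y,z,w)$ replaced by $(z/x,\,xw,\,1,\,zw)$, and use $f(1,a)=2\psi(a)$ together with $f(-1,-a)=0$. This gives $f(xw,yz)f(y/w,xzw^2)=f(z/x,xw)\psi(xy)$, which is how your $f(1,a)$ idea actually enters.

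\textbf{The bracket.} Adding (i) and (ii) gives, for $ab=cd$,
\[
f(a,b)f(c,d)=f(ac,bd)f(ad,bc)+a\,f(b/c,ac^2d)f(b/d,acd^2).
\]
Take $(a,b,c,d)=(xzn,\,w/(xn),\,n,\,xy/n)$ and use $f(1/(xz),x^3y^2z)=(xz)^{-1}f(xz,yw)$, which follows from $f(a,b)=a f(1/a,a^2b)$. The right-hand side is then exactly the bracket, so the bracket equals $f(w/(xn),xzn)f(n,xy/n)$. This is (iii).

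What your route gains over the paper's bare citation is the observation that (iii) is a purely formal consequence of (i) and (ii).

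Your fallback also works. Both sides are order-two theta functions in $n$ with multiplier $(xzn^2)^{-1}$ under $n\mapsto nxy$, and the left side vanishes at $n=-1$ trivially because the two products coincide there. However, the ``constant'' depends on $x$ and $z$ and must be computed, for instance at $n=1$. That computation needs the same ingredients (i), (ii) and the case $c=1$ of (i), so the fallback is not shorter.
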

	\begin{proof}
		For the proofs of (i) and (ii), one can refer Ramanujan Notebook \cite[pp. 45-47]{Berndt1991}and \cite[p. 199]{Ramanujan1957}. (iii) is due to Adiga et al. \cite{Adiga_1985}.
	\end{proof}

	\begin{lemma}
		We have 
		\begin{align}\label{2.1}
			\frac{\varphi(q^5)}{\varphi(-q^5)} + \dfrac{f(-x,-y)}{f(x,y)} = \dfrac{2 f^2(xq^5, yq^5)}{\varphi(-q^5) f(x,y)},
		\end{align} 
		\begin{align}\label{2.2}
			\frac{\varphi(q^5)}{\varphi(-q^5)} - \dfrac{f(-x,-y)}{f(x,y)} = \dfrac{2x f^2(y/q^5, xq^{15})}{\varphi(-q^5) f(x,y)},
		\end{align} 
		\begin{align}\label{2.3}
			\frac{2x\psi(x^2y^2)}{\varphi(xy)} - \dfrac{f(x^2,y^2)}{f(y/x,x^3y)} = \dfrac{ f^2(x, y)}{\varphi(xy) f(y/x,x^3y)}.
		\end{align} and 
		\begin{align}\label{2.4}
			&	\dfrac{f(x,y) f(z,w) f(xn,y/n) f(zn,w/n)}{f(-x,-y) f(-z,-w)f(-xn,-y/n) f(-zn,-w/n)} -1 \nonumber \\ &= \dfrac{2x f(z/x,xw) f(w/xn,xzn) f(n,xy/n) \psi(xy)}{f(-x,-y) f(-z,-w)f(-xn,-y/n) f(-zn,-w/n)}.
		\end{align} 
	\end{lemma}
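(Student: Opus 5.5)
The plan is to derive all four displays directly from Lemma~\ref{LemmaA}. Each one is obtained by choosing the parameters $x,y,z,w,n$ suitably and then dividing by a common product of theta functions. The identities \eqref{2.1}, \eqref{2.2} and \eqref{2.4} are pure rearrangements. The only delicate point is the signs in \eqref{2.3}.

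For \eqref{2.1} and \eqref{2.2}, I take $z=w=q^5$ in Lemma~\ref{LemmaA}(i),(ii). This requires $xy=q^{10}$, which is the implicit hypothesis here. Then $f(z,w)=\varphi(q^5)$ and $f(-z,-w)=\varphi(-q^5)$. In (i) the right side becomes $2f(xq^5,yq^5)f(xq^5,yq^5)=2f^2(xq^5,yq^5)$. In (ii) the right side becomes $2xf(y/q^5,xq^{15})^2$. Dividing both identities by $\varphi(-q^5)f(x,y)$ gives \eqref{2.1} and \eqref{2.2} exactly. Likewise, \eqref{2.4} is Lemma~\ref{LemmaA}(iii) divided through by $f(-x,-y)f(-z,-w)f(-xn,-y/n)f(-zn,-w/n)$.

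For \eqref{2.3}, I take $z=x$ and $w=y$ in Lemma~\ref{LemmaA}(i),(ii). Part (i) gives
\begin{equation*}
f^2(x,y)+f^2(-x,-y)=2f(x^2,y^2)\varphi(xy).
\end{equation*}
Part (ii) gives $f^2(x,y)-f^2(-x,-y)=2xf(y/x,x^3y)f(1,x^2y^2)$. Since $f(1,x^2y^2)=2\psi(x^2y^2)$ by \eqref{t2} and the product formula \eqref{t3}, this equals $4x\psi(x^2y^2)f(y/x,x^3y)$. I then eliminate one of $f^2(\pm x,\pm y)$ between the two relations. Doing so expresses a single squared theta function as a combination of $f(x^2,y^2)\varphi(xy)$ and $2x\psi(x^2y^2)f(y/x,x^3y)$. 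Dividing by $\varphi(xy)f(y/x,x^3y)$ then puts it in the shape of \eqref{2.3}.

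The main obstacle is to match the printed signs exactly, including which of $f^2(x,y)$ or $f^2(-x,-y)$ appears and the sign in front of $f(x^2,y^2)/f(y/x,x^3y)$. I would settle this by replacing $(x,y)\mapsto(-x,-y)$ where needed, using that $f(x^2,y^2)$, $\varphi(xy)$, $\psi(x^2y^2)$ and $f(y/x,x^3y)$ are invariant under this change. As an independent check, I would compare the first few terms of the expansions \eqref{t3} on both sides of the final form of \eqref{2.3}.
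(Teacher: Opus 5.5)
Your treatment of \eqref{2.1}, \eqref{2.2} and \eqref{2.4} is exactly the paper's argument: specialize Lemma~\ref{LemmaA} and divide. You are also right that \eqref{2.1}--\eqref{2.2} tacitly need $xy=q^{10}$.

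The gap is in \eqref{2.3}. Your two relations with $z=x$, $w=y$ are correct, including $f(1,x^2y^2)=2\psi(x^2y^2)$. But you stop before the elimination and instead propose to match the printed signs via $(x,y)\mapsto(-x,-y)$, and that step cannot succeed. Adding your relations, which is what the paper does, gives
\begin{equation*}
\frac{2x\psi(x^2y^2)}{\varphi(xy)}+\frac{f(x^2,y^2)}{f(y/x,x^3y)}=\frac{f^2(x,y)}{\varphi(xy)f(y/x,x^3y)}.
\end{equation*}
Subtracting them gives
\begin{equation*}
\frac{f(x^2,y^2)}{f(y/x,x^3y)}-\frac{2x\psi(x^2y^2)}{\varphi(xy)}=\frac{f^2(-x,-y)}{\varphi(xy)f(y/x,x^3y)}.
\end{equation*}
Under $(x,y)\mapsto(-x,-y)$, the quantities $f(x^2,y^2)$, $\varphi(xy)$, $\psi(x^2y^2)$ and $f(y/x,x^3y)$ are unchanged, $x$ changes sign, and $f(x,y)\leftrightarrow f(-x,-y)$. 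So the substitution only swaps these two identities. It never produces the printed form, which pairs $-f(x^2,y^2)/f(y/x,x^3y)$ with $f^2(x,y)$.

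In fact the printed \eqref{2.3} is false. Take $x=y=q$ and use $f(1,q^4)=2\psi(q^4)$; it becomes $4q\psi^2(q^4)-\varphi^2(q^2)=\varphi^2(q)$, whose constant terms are $-1$ and $1$. The plus-sign version instead reduces to the classical $\varphi^2(q)=\varphi^2(q^2)+4q\psi^2(q^4)$.

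So the missing step is a conclusion, not a trick. Carrying out the elimination shows that the minus sign in \eqref{2.3} is a misprint. What you can prove, and what the paper's own ``adding (i) and (ii)'' argument proves, is the plus-sign identity displayed above. The series-coefficient check you mentioned would have shown this immediately; you should have done it rather than asserting that the printed signs can be reached.
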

	\begin{proof}
		Substituting $z=w=q^5$ in the equations (i) and (ii) of Lemma \ref{LemmaA}, we obtain the above two identities \eqref{2.1} and \eqref{2.2} respectively. Adding the equations (i) and (ii) of Lemma \ref{LemmaA} and substituting $z=x$ and $w=y$, we obtain \eqref{2.3}. Substituting $xy=zw$ in (iii) of Lemma \ref{LemmaA}, we obtain \eqref{2.4}.
	\end{proof}

	\begin{lemma}\label{LemmaB}
		We have
		\begin{enumerate}[(i)]
			\item   $f(q,q^4) f(q^2,q^3) = \dfrac{\varphi(-q^5) f_5}{\chi(-q)}.$
			\item $f(-q,-q^4) f(-q^2,-q^3) = f_1f_5.$
			\item $f(q,q^{9}) f(q^3,q^{7})  = \chi(q) f_5 f_{20}.$  
		\end{enumerate}
	\end{lemma}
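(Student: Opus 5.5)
Each of the three identities comes from the Jacobi triple product \eqref{t3}, written as $f(a,b)=(-a,-b,ab;ab)_\infty$. The plan is to expand both sides into $q$-products, sort the factors by residue classes, and compare. The main tools are the elementary rewrites $(-q^k;q^m)_\infty=(q^{2k};q^{2m})_\infty/(q^k;q^m)_\infty$ and $(q^{a};q^m)_\infty$ products over complete residue systems, which collapse to $f_m$-type quotients.

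For (ii), we have
\[
f(-q,-q^4)f(-q^2,-q^3)=(q,q^4,q^5;q^5)_\infty(q^2,q^3,q^5;q^5)_\infty .
\]
The four factors $(q^j;q^5)_\infty$, $j=1,\dots,4$, together with one factor $(q^5;q^5)_\infty$, give $(q;q)_\infty=f_1$. The remaining $(q^5;q^5)_\infty$ is $f_5$, so the product is $f_1f_5$.

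For (i), the same expansion gives
\[
f(q,q^4)f(q^2,q^3)=f_5^2\,\prod_{j=1}^4(-q^j;q^5)_\infty .
\]
Next, $\prod_{j=1}^4(-q^j;q^5)_\infty=(-q;q)_\infty/(-q^5;q^5)_\infty$. By the definition of $\chi$, or by Remark \ref{tf}, $(-q;q)_\infty=f_2/f_1=1/\chi(-q)$, and similarly $(-q^5;q^5)_\infty=f_{10}/f_5$. So the left side equals $f_5^3/(f_{10}\chi(-q))$. Since $\varphi(-q^5)=f_5^2/f_{10}$ by Remark \ref{tf}, this is $\varphi(-q^5)f_5/\chi(-q)$, as claimed.

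For (iii), we have
\[
f(q,q^9)f(q^3,q^7)=(q^{10};q^{10})_\infty^2\prod_{j\in\{1,3,7,9\}}(-q^j;q^{10})_\infty .
\]
The odd residues mod $10$ are $\{1,3,5,7,9\}$, so
\[
\prod_{j\in\{1,3,7,9\}}(-q^j;q^{10})_\infty=\frac{(-q;q^2)_\infty}{(-q^5;q^{10})_\infty}=\frac{\chi(q)}{\chi(q^5)} .
\]
Using $\chi(q)=f_2^2/(f_1f_4)$, we get $\chi(q^5)=f_{10}^2/(f_5f_{20})$. The left side therefore becomes
\[
f_{10}^2\cdot\chi(q)\cdot\frac{f_5f_{20}}{f_{10}^2}=\chi(q)f_5f_{20}.
\]

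There is no genuine obstacle. The only step needing care is the bookkeeping of residue classes in (iii), making sure that removing the class $5 \pmod{10}$ from $(-q;q^2)_\infty$ is done correctly. It also requires confirming the product formula for $\chi(q)$ from Remark \ref{tf}, namely $\chi(q)=(-q;q^2)_\infty=f(q)/f_2\cdot$(appropriate factors), which reduces to $f_2^2/(f_1f_4)$.
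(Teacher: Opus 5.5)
Your proof is correct. It differs from the paper only in that the paper gives no argument at all: it simply refers to Berndt's book (Entry 9, p. 258). You instead verify all three identities directly from the Jacobi triple product \eqref{t3} by sorting factors into residue classes, and each computation checks out:
\begin{itemize}
\item In (i), both sides reduce to $f_2f_5^3/(f_1f_{10})$.
\item In (ii), the factors $(q^j;q^5)_\infty$, $1\le j\le 5$, assemble into $f_1$, leaving one copy of $f_5$.
\item In (iii), removing the class $5 \pmod{10}$ from $(-q;q^2)_\infty$ gives exactly $\chi(q)/\chi(q^5)$, with $\chi(q^5)=(-q^5;q^{10})_\infty=f_{10}^2/(f_5f_{20})$.
\end{itemize}

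The only loose phrase is the remark about ``appropriate factors'' at the end, since there are none. Indeed $f(q)=(-q;-q)_\infty=(-q;q^2)_\infty(q^2;q^2)_\infty=\chi(q)f_2$ exactly, so Remark \ref{tf} gives $\chi(q)=f_2^2/(f_1f_4)$ at once. Alternatively, directly, $(-q;q^2)_\infty=(q^2;q^4)_\infty/(q;q^2)_\infty=(f_2/f_4)/(f_1/f_2)$.

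As for what each route buys: the citation is short and places the identities in Ramanujan's Chapter 19 framework. Your verification is elementary and self-contained, and it independently confirms the exact normalizations, such as $\chi(q)$ versus $\chi(-q)$ and which $f_n$ occur. That matters here because (iii) and its $q\to -q$ analogue $f(-q,-q^9)f(-q^3,-q^7)=\chi(-q)f(q^5)f_{20}$ are what one needs to evaluate $\Gamma_1\Gamma_2$ in \eqref{G2}.
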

	\begin{proof}
		The proofs of the above-mentioned identities can be found in \cite[Entry 9, p. 258]{Berndt1991}.
	\end{proof}
	
	\noindent Ramanujan's $_1 \psi _1$ summation formula \cite{Adiga_1985, Andrews1969} is given by 
	\begin{align}\label{1sum}
		_1 \psi _1 \left( \substack{a \\ b} ; z\right) = \sum_{n= -\infty}^{\infty} \dfrac{(a;q)_n}{(b;q)_n} z^n = \dfrac{(az;q)_\infty (b/a;q)_\infty (q/az;q)_\infty (q;q)_\infty}{(z;q)_\infty (q/a;q)_\infty (b/az;q)_\infty (b;q)_\infty},
	\end{align} 
	where $\dfrac{b}{a} < |z|< 1$.
	
	\noindent For convenience, we write
	\begin{align}\label{Gamma}
		\Gamma_1 := \dfrac{f(q, q^{9})}{f(-q, -q^{9})} , \,\,\, \Gamma_2 := \dfrac{f(q^3, q^{7})}{f(-q^3, -q^{7})}
	\end{align} and 
	\begin{align}\label{Omega}
		\Omega_1 := \dfrac{f(q^{4}, q^{6})}{f(-q, -q^{9})}, \,\,\, \Omega_2 := q\dfrac{f(q^2, q^{8})}{f(-q^3, -q^{7})}.
	\end{align}

	\section{Theta function identities of level ten}
	We obtain theta function identities of level 10 in this section.

	\begin{lemma}\label{LF}
		We have 
		\begin{align*}
			\Omega_1 -\Omega_2  = \dfrac{\psi^2(q^2)- q^2\psi^2(q^{10})}{\psi^2(-q^5)},
		\end{align*} where $\Omega_1$ and $\Omega_2$ are defined as in \eqref{Omega}.
	\end{lemma}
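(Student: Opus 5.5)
The plan is to clear denominators and reduce the claim to an identity between theta products, which can then be checked either with Lemma \ref{LemmaA} or by comparing Lambert series coming from Ramanujan's $_1\psi_1$ summation \eqref{1sum}.

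\textbf{Step 1 (clearing denominators).} By \eqref{t3}, $f(-q,-q^9)f(-q^3,-q^7)=(q,q^3,q^7,q^9;q^{10})_\infty (q^{10};q^{10})_\infty^2$. This equals $\chi(-q)f_{10}^2/\chi(-q^5)$. From Remark \ref{tf} we also have $\psi(-q^5)=f_5f_{20}/f_{10}$. Hence the lemma is equivalent to
\begin{align*}
f(q^4,q^6)f(-q^3,-q^7)-q\,f(q^2,q^8)f(-q,-q^9)=\frac{\chi(-q)f_{10}^4}{\chi(-q^5)f_5^2f_{20}^2}\bigl(\psi^2(q^2)-q^2\psi^2(q^{10})\bigr).
\end{align*}
Every product here is an explicit infinite product, so both sides can be rewritten entirely in terms of the $f_n$ and the two remaining non-eta theta products.

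\textbf{Step 2 (Lambert series for each $\Omega_i$).} I would take \eqref{1sum} with base $q^{10}$, $a=c$ and $b=cq^{10}$. This gives the standard identity
\begin{align*}
\sum_{n=-\infty}^{\infty}\frac{x^n}{1-cq^{10n}}=\frac{f_{10}^2\, f(-xc,-q^{10}/(xc))}{f(-x,-q^{10}/x)\,f(-c,-q^{10}/c)}.
\end{align*}
Choose $c=q$, $x=-q^3$ for $\Omega_1$, and $c=q^3$, $x=-q^{-1}$ (suitably shifted) for $\Omega_2$. Then $\Omega_1$ and $\Omega_2$ become Lambert series, each multiplied by an extra factor $f(q^3,q^7)$ or $f(q,q^9)$. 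To remove these factors I would use the product $\Gamma_1\Gamma_2$, which Lemma \ref{LemmaB}(iii) evaluates as $\chi(q)f_5f_{20}\chi(-q^5)/(\chi(-q)f_{10}^2)$.

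\textbf{Step 3 (the right-hand side as Lambert series).} On the right, use the classical expansion $\psi^2(q)=\sum_{n\ge0}q^n/(1-q^{2n+1})$, itself a special case of \eqref{1sum}, with $q\to q^2$ and $q\to q^{10}$. Then $\psi^2(q^2)-q^2\psi^2(q^{10})$ becomes an explicit Lambert series. Next, dissect $\sum q^{2n}/(1-q^{4n+2})$ according to $n$ modulo $5$. The terms with $2n+1\equiv 0\pmod 5$ cancel exactly against the $q^2\psi^2(q^{10})$ part. The remaining residue classes should match the series produced in Step 2.

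\textbf{Expected obstacle.} The main difficulty is Step 2: reconciling the extra theta factors so that the difference $\Omega_1-\Omega_2$ becomes a single Lambert series. If the bookkeeping becomes intractable, my fallback is purely product-theoretic. I would factor the right side as $(\psi(q^2)-q\psi(q^{10}))(\psi(q^2)+q\psi(q^{10}))$ and express each factor as a product of two theta functions via Lemma \ref{LemmaA}(i),(ii), taking $(x,y,z,w)=(q,q^9,q^3,q^7)$ and its variants, for which $xy=zw=q^{10}$. The left-hand numerator of Step 1 would then be matched using \eqref{2.1}, \eqref{2.2} and Lemma \ref{LemmaB}.
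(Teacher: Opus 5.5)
Your main route has two genuine gaps, and the fallback is not yet an argument.

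The first gap is in Step 2. The obstacle you anticipate comes from your parameter choice, and $\Gamma_1\Gamma_2$ cannot repair it. With $c=q$, $x=-q^3$, the specialization of \eqref{1sum} (its prefactor is $f_{10}^3$, not $f_{10}^2$) gives $\sum_n(-1)^nq^{3n}/(1-q^{10n+1})=f_{10}^3\,\Omega_1/f(q^3,q^7)$. Your choice for $\Omega_2$ leaves the factor $f(q,q^9)$ instead. So $\Omega_1-\Omega_2=f_{10}^{-3}\bigl(f(q^3,q^7)S_1-f(q,q^9)S_2\bigr)$, where $S_1,S_2$ are your two Lambert series. The weights differ, and knowing only their product $f(q,q^9)f(q^3,q^7)$, which is all $\Gamma_1\Gamma_2$ supplies, cannot turn this into a single series. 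The missing idea is to take $x=-q^5$ in both cases, so that the leftover factor is $f(q^5,q^5)=\varphi(q^5)$ each time:
\begin{align*}
\sum_{n=-\infty}^{\infty}\frac{(-1)^nq^{5n}}{1-q^{10n+1}}=\frac{f_{10}^3}{\varphi(q^5)}\,\Omega_1,\qquad \sum_{n=-\infty}^{\infty}\frac{(-1)^nq^{5n}}{1-q^{10n+3}}=\frac{f_{10}^3}{q\,\varphi(q^5)}\,\Omega_2 .
\end{align*}
No shifting is needed, since $|q^{10}|<|q^5|<1$. This is exactly the paper's substitution $z=q^5$, $a=-q,-q^3$ in base $q^{10}$, followed by $q\to-q$. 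The common prefactor then becomes $1/\psi^2(-q^5)$ via $\varphi(q^5)\psi^2(-q^5)=f_{10}^3$, which follows from Remark \ref{tf}.

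The second gap is in Step 3, which fails as written. The series $\sum_{n\ge0}q^n/(1-q^{2n+1})$ equals $\sum_{N\ge0}d(2N+1)q^N$, where $d$ counts divisors; it is not $\psi^2(q)$, and at $q^2$ the coefficients are $2$ and $1$. Even on its own terms, the class $n\equiv 2\pmod 5$ of $\sum q^{2n}/(1-q^{4n+2})$ gives $q^4$ times your series for $\psi^2(q^{10})$, not $q^2$ times it, so the stated cancellation does not happen. The expansion you need is $\psi^2(q^2)=\sum_{k\ge0}g(k)$ with $g(k)=q^k/(1+q^{2k+1})$. Its terms with $k\equiv 2\pmod 5$ sum to exactly $q^2\psi^2(q^{10})$. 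Since $g(-k-1)=g(k)$, the classes $k\equiv 3,4$ fold onto the negative halves of $k\equiv 1,0$, so
$\psi^2(q^2)-q^2\psi^2(q^{10})=\sum_{r=0}^{1}\sum_{k\in\mathbb{Z}}q^{5k+r}/(1+q^{10k+2r+1})$.
After $q\to-q$ this becomes $\sum_n(-1)^nq^{5n}\bigl(\frac{1}{1-q^{10n+1}}-\frac{q}{1-q^{10n+3}}\bigr)$, which by the display above is $f_{10}^3(\Omega_1-\Omega_2)/\varphi(q^5)$.

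As for the fallback, Lemma \ref{LemmaA}(i),(ii) at $(x,y,z,w)=(q,q^9,q^3,q^7)$ give $2f(q^4,q^{16})f(q^8,q^{12})$ and $2qf(q^6,q^{14})f(q^2,q^{18})$. Neither involves $\psi(q^2)\pm q\psi(q^{10})$. You also give no mechanism for matching the left-hand numerator from Step 1.
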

	
	\begin{proof}
		We have \cite{Berndt1991}
		\begin{align*}
			\psi^2(q^2) =  \sum_{k=0}^{\infty} \dfrac{ q^{k}}{1+q^{2k+1}}.
		\end{align*}
		Hence,
		\begin{align}\label{3.2a}
			\psi^2(q^2)- q^2\psi^2(q^{10}) &=  \sum_{k=0}^{\infty} \dfrac{q^{k}}{1+q^{2k+1}} - q^3\sum_{k=0}^{\infty} \dfrac{ q^{5k}}{1+q^{10k+5}}. \nonumber \\
			&= \sum_{r=0}^{1}  \sum_{k=- \infty} ^{\infty} \dfrac{q^{5k+r}}{1+q^{10k+2r+1}}.
		\end{align}
		Substituting $b=aq$ in \eqref{1sum}, we obtain
		\begin{align*}
			\sum_{n= -\infty}^{\infty} \dfrac{z^n}{1-aq^{n}} =  \dfrac{f(-az,-q/az) (q;q)_\infty^3}{f(-z,-q/z) f(-a,-q/a)}.
		\end{align*}
		Replacing $q$ by $q^{10}$ in the above, we obtain
		\begin{align}\label{1sum2_10}
			\sum_{n= -\infty}^{\infty} \dfrac{z^n}{1-aq^{10n}} = \dfrac{f(-az,-q^{10}/az) (q^{10};q^{10})_\infty^3}{f(-z,-q^{10}/z) f(-a,-q^{10}/a)}.
		\end{align}
		Substituting $z=q^{5}$ and $a=-q$ and $-q^{3}$ in \eqref{1sum2_10}, the following identities are obtained respectively:
		\begin{align*}
			\sum_{n= -\infty}^{\infty} \dfrac{q^{5n}}{1+q^{10n+1}} = \dfrac{f(q^4,q^{6}) (q^{10};q^{10})_\infty^3}{\varphi(-q^5)f(q,q^{9}) },
		\end{align*} and
		\begin{align*}
			\sum_{n= -\infty}^{\infty} \dfrac{q^{5n}}{1+q^{10n+3}} = \dfrac{f(q^2,q^{8}) (q^{10};q^{10})_\infty^3}{\varphi(-q^5)f(q^3,q^{7}) }.
		\end{align*}
		Using the above identities in \eqref{3.2a} and changing $q$ to $-q$, upon using the definitions of $\Omega_1$ and $\Omega_2$ , we obtain the desired result. 
	\end{proof}

	\begin{theorem}\label{TD}
		We have 
		\begin{align*}
			\Gamma_1^2 + \Gamma_2^2 =& \dfrac{\varphi^4(q)}{4 \varphi^4(-q^{10})} + \dfrac{\varphi^4(q^5)}{4\varphi^4(-q^{10})} - \dfrac{\varphi^2(q) \varphi^2(q^5)}{2 \varphi^4(-q^{10})}  + \dfrac{2 \psi(q) \psi(-q^5) } {\psi(-q) \psi(q^5) },
		\end{align*}  where $\Gamma_1$ and $\Gamma_2$ are defined as in \eqref{Gamma}.
	\end{theorem}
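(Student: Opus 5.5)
The plan is to write $\Gamma_1^2+\Gamma_2^2=(\Gamma_1-\Gamma_2)^2+2\Gamma_1\Gamma_2$. Observe that
\[
\frac{\varphi^4(q)+\varphi^4(q^5)-2\varphi^2(q)\varphi^2(q^5)}{4\varphi^4(-q^{10})}=\left(\frac{\varphi^2(q)-\varphi^2(q^5)}{2\varphi^2(-q^{10})}\right)^2 .
\]
It therefore suffices to prove two separate identities:
\[
\Gamma_1\Gamma_2=\frac{\psi(q)\psi(-q^5)}{\psi(-q)\psi(q^5)},\qquad \Gamma_1-\Gamma_2=\frac{\varphi^2(q)-\varphi^2(q^5)}{2\varphi^2(-q^{10})}.
\]

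\emph{Step 1 (the product).} Lemma \ref{LemmaB}(iii) gives $f(q,q^9)f(q^3,q^7)=\chi(q)f_5f_{20}$. Replacing $q$ by $-q$, and noting that $f(-q^{20})$ is unchanged while $f(-q^5)$ becomes $f(q^5)$, we get
\[
f(-q,-q^9)f(-q^3,-q^7)=\chi(-q)f(q^5)f_{20}.
\]
Hence $\Gamma_1\Gamma_2=\chi(q)f_5/(\chi(-q)f(q^5))$. Writing $\chi(q)=f_2^2/(f_1f_4)$, $\chi(-q)=f_1/f_2$, $f(q^5)=f_{10}^3/(f_5f_{20})$, and the $\psi$'s via Remark \ref{tf}, both sides reduce to the same eta quotient. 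This step is routine.

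\emph{Step 2 (the difference).} Put everything over the common denominator:
\[
\Gamma_1-\Gamma_2=\frac{f(q,q^9)f(-q^3,-q^7)-f(-q,-q^9)f(q^3,q^7)}{f(-q,-q^9)f(-q^3,-q^7)}.
\]
The numerator is exactly of the form in Lemma \ref{LemmaA}(ii) with $x=q$, $y=q^9$, $z=-q^3$, $w=-q^7$, since $xy=zw=q^{10}$. This gives the numerator as
\[
2q\,f(y/z,xz^2w)\,f(y/w,xzw^2)=2q\,f(-q^6,-q^{14})\,f(-q^2,-q^{18}).
\]
By Lemma \ref{LemmaB}(iii) with $q\to -q^2$, this product equals $\chi(-q^2)f_{10}f_{40}$ (I will double-check the sign bookkeeping here, since $f(-q^{20})$, $f(-q^{40})$ and $f(-q^{10})$ behave differently under $q\to -q^2$). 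The denominator, multiplied up by $f(q,q^9)f(q^3,q^7)$ if convenient, is known from Step 1. Thus $\Gamma_1-\Gamma_2$ becomes an explicit eta quotient, essentially of the shape $q\,\chi(\cdot)f_{5}f_{20}$-type divided by $\varphi^2(-q^{10})=f_{10}^4/f_{20}^2$.

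\emph{Main obstacle.} The remaining task is to identify this eta quotient with $(\varphi^2(q)-\varphi^2(q^5))/(2\varphi^2(-q^{10}))$. That is, we need the classical degree-five identity
\[
\varphi^2(q)-\varphi^2(q^5)=4q\,\chi(q)f(-q^5)f(-q^{20}),
\]
which is in Chapter 19 of \cite{Berndt1991} (Entry 9/10). If one wants it from the present tools, a first attempt is to factor it as $(\varphi(q)-\varphi(q^5))(\varphi(q)+\varphi(q^5))$. One then expresses each factor as a product of theta functions $f(\cdot,\cdot)$ of level 20 via Lemma \ref{LemmaA}(i),(ii), and collapses the product with Lemma \ref{LemmaB}. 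Once this identity is in hand, squaring Step 2 and adding twice Step 1 yields the theorem.
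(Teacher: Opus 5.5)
Your proof is correct and has the same skeleton as the paper's. Both use the splitting $\Gamma_1^2+\Gamma_2^2=(\Gamma_1-\Gamma_2)^2+2\Gamma_1\Gamma_2$, and both evaluate $\Gamma_1\Gamma_2$ through Lemma \ref{LemmaB}(iii) and $q\to-q$; both sides equal $f_2^3f_5^2f_{20}/(f_1^2f_4f_{10}^3)$.

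You differ on $\Gamma_1-\Gamma_2$. The paper simply quotes it from (3.19) of Harshitha et al., printed with $\varphi^2(q)+\varphi^2(q^5)$ in the numerator. That is a misprint, for two reasons:
\begin{itemize}
\item the right side would have constant term $1$, while $\Gamma_1-\Gamma_2=2q+2q^2+\cdots$;
\item squaring it would put a plus sign on the cross term $\varphi^2(q)\varphi^2(q^5)$, contradicting the theorem.
\end{itemize}
Your minus sign is the correct one. Your derivation also makes the argument more self-contained: Lemma \ref{LemmaA}(ii) with $(x,y,z,w)=(q,q^9,-q^3,-q^7)$ does give the numerator $2q\,f(-q^2,-q^{18})f(-q^6,-q^{14})$.

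Fix the sign you flagged. Under $q\to-q^2$ in Lemma \ref{LemmaB}(iii), $f(-q^5)$ becomes $f(q^{10})$, not $f_{10}$, and $f_{20}$ becomes $f_{40}$. So the numerator is $2q\,\chi(-q^2)f(q^{10})f_{40}$ and the denominator is $\chi(-q)f(q^5)f_{20}$. Converting to $f_n$'s gives exactly
\[
\Gamma_1-\Gamma_2=\frac{2q\,f_2^2f_5f_{20}^3}{f_1f_4f_{10}^4}=\frac{2q\,\chi(q)f_5f_{20}}{\varphi^2(-q^{10})}.
\]
Ramanujan's identity $\varphi^2(q)-\varphi^2(q^5)=4q\chi(q)f(-q^5)f(-q^{20})$ (Chapter 19 of Berndt) then finishes the proof. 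Simply cite it.

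Your fallback of factoring $\varphi(q)\pm\varphi(q^5)$ and applying Lemma \ref{LemmaA} does not work as described. That lemma turns expressions $f(x,y)f(z,w)\pm f(-x,-y)f(-z,-w)$ with $xy=zw$ into products, and $\varphi(q)\pm\varphi(q^5)$ is not of that form.

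With the citation, your argument rests on one classical identity rather than a recent paper's result, and it makes the correct sign transparent.
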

	
	\begin{proof} We have
		\begin{align}\label{G1}
			\Gamma_1^2 + \Gamma_2^2 =	(\Gamma_1-\Gamma_2)^2+ 2\Gamma_1 \Gamma_2.
		\end{align}
		Using the definitions, we obtain 
		\begin{align}\label{G2}
			\Gamma_1\Gamma_2 = \dfrac{f(q,q^9) f(q^3,q^7)}{f(-q,-q^9)f(-q^3,-q^7)}=\dfrac{ \psi(q) \psi(-q^5) } {\psi(-q) \psi(q^5) }.
		\end{align} 
		From (3.19) of \cite{Harshitha_2022}, we have the following identity:
		\begin{align*}
			\Gamma_1 -\Gamma_2  = \dfrac{\varphi^2(q)+ \varphi^2(q^5)}{2 \varphi^2(-q^{10})},
		\end{align*} where $\Gamma_1$ and $\Gamma_2$ are defined as in \eqref{Gamma}.
		Using the above equation and \eqref{G2} in \eqref{G1}, the result is obtained.
		
	\end{proof}
	
	\begin{theorem}\label{TF}
		We have 
		\begin{align*}
			\Omega_1^2 + \Omega_2^2 =& \dfrac{\psi^4(q^2)}{ \psi^4(-q^{5})} + \dfrac{q^4\psi^4(q^{10})}{\psi^4(-q^{5})} - \dfrac{2q^2\psi^2(q^2) \psi^2(q^{10})}{ \psi^4(-q^{5})}   		+ \dfrac{ 2q\varphi(-q^{10}) \psi(-q) } {\varphi(-q)\psi(q^5) } ,
		\end{align*}		where $\Omega_1$ and $\Omega_2$ are defined as in \eqref{Omega}.
	\end{theorem}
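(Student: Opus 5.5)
The plan mirrors the proof of Theorem \ref{TD}. We write
\begin{align*}
\Omega_1^2+\Omega_2^2=(\Omega_1-\Omega_2)^2+2\Omega_1\Omega_2 .
\end{align*}
For the square of the difference we use Lemma \ref{LF}. Squaring $\Omega_1-\Omega_2=(\psi^2(q^2)-q^2\psi^2(q^{10}))/\psi^2(-q^5)$ gives exactly the first three terms of the claimed identity:
\begin{align*}
\frac{\psi^4(q^2)}{\psi^4(-q^5)}+\frac{q^4\psi^4(q^{10})}{\psi^4(-q^5)}-\frac{2q^2\psi^2(q^2)\psi^2(q^{10})}{\psi^4(-q^5)}.
\end{align*}
It therefore remains to show that
\begin{align*}
\Omega_1\Omega_2=q\,\frac{f(q^4,q^6)f(q^2,q^8)}{f(-q,-q^9)f(-q^3,-q^7)}=\frac{q\,\varphi(-q^{10})\psi(-q)}{\varphi(-q)\psi(q^5)}.
\end{align*}

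For this product identity I would convert everything to $f_n$ notation via the Jacobi triple product \eqref{t3} and Remark \ref{tf}. The numerator is $f(q^2,q^8)f(q^4,q^6)$, which is Lemma \ref{LemmaB}(i) with $q\to q^2$:
\begin{align*}
f(q^2,q^8)f(q^4,q^6)=\frac{\varphi(-q^{10})f_{10}}{\chi(-q^2)}.
\end{align*}
The denominator $f(-q,-q^9)f(-q^3,-q^7)$ is handled by the triple product directly. We have $f(-q,-q^9)=(q,q^9,q^{10};q^{10})_\infty$ and $f(-q^3,-q^7)=(q^3,q^7,q^{10};q^{10})_\infty$. Their product is
\begin{align*}
(q;q^2)_\infty\,(q^5;q^{10})_\infty^{-1}\,f_{10}^2=\frac{\chi(-q)f_{10}^2}{\chi(-q^5)}.
\end{align*}
Hence
\begin{align*}
\Omega_1\Omega_2=\frac{q\,\varphi(-q^{10})\chi(-q^5)}{\chi(-q^2)\chi(-q)f_{10}}.
\end{align*}

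Next we use $\chi(-q)=f_1/f_2$, $\chi(-q^2)=f_2/f_4$, $\chi(-q^5)=f_5/f_{10}$ and $\varphi(-q^{10})=f_{10}^2/f_{20}$. These reduce the left side to
\begin{align*}
q\,\frac{f_{10}^2}{f_{20}}\cdot\frac{f_5 f_4}{f_1 f_{10}^2}=q\,\frac{f_4 f_5}{f_1 f_{20}}.
\end{align*}
On the right side, $\varphi(-q)=f_1^2/f_2$, $\psi(-q)=f_1f_4/f_2$ and $\psi(q^5)=f_{10}^2/f_5$, so
\begin{align*}
\frac{q\,\varphi(-q^{10})\psi(-q)}{\varphi(-q)\psi(q^5)}=q\,\frac{f_{10}^2}{f_{20}}\cdot\frac{f_1f_4}{f_2}\cdot\frac{f_2}{f_1^2}\cdot\frac{f_5}{f_{10}^2}=q\,\frac{f_4f_5}{f_1f_{20}}.
\end{align*}
The two sides agree, which confirms the product identity. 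Substituting it into the decomposition and adding $2\Omega_1\Omega_2$ to the squared difference then yields the theorem.

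The main obstacle is only bookkeeping: correctly splitting $(q,q^3,q^7,q^9;q^{10})_\infty$ as $(q;q^2)_\infty/(q^5;q^{10})_\infty$, and keeping track of the factor $q$ in $\Omega_2$. Everything else follows directly from Lemma \ref{LF}.
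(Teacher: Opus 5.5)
Your proof is correct and follows the paper's argument: you write $\Omega_1^2+\Omega_2^2=(\Omega_1-\Omega_2)^2+2\Omega_1\Omega_2$, square Lemma \ref{LF}, and evaluate the product $\Omega_1\Omega_2$. The paper only asserts the product formula ``using the definitions,'' whereas you verify it via Lemma \ref{LemmaB}(i) with $q\to q^2$ and the splitting $(q,q^3,q^7,q^9;q^{10})_\infty=\chi(-q)/\chi(-q^5)$, reducing both sides to $q f_4 f_5/(f_1 f_{20})$; this supplies the omitted step correctly.
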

	
	\begin{proof}We have 
		\begin{align*}
			\Omega_1^2 + \Omega_2^2 =	(\Omega_1-\Omega_2)^2+ 2\Omega_1 \Omega_2.
		\end{align*}
		Using the definitions, we obtain 
		\begin{align*}
			\Omega_1\Omega_2 = \dfrac{qf(q^4,q^6) f(q^2,q^8)}{f(-q,-q^9)f(-q^3,-q^7)}=\dfrac{ q\varphi(-q^{10}) \psi(-q) } {\varphi(-q) \psi(q^5) }.
		\end{align*} 
		Using this and Lemma \ref{LF} in the expression above, the result is obtained.
		
	\end{proof}
	
	\begin{theorem}\label{T/F}
		We have 
		\begin{align*}
			\dfrac{1}{\Omega_1^2} +\dfrac{1}{\Omega_2^2} =& \dfrac{\varphi^4(-q)}{16q^4 \psi^4(-q^{5})} + \dfrac{\varphi^4(-q^5)}{16q^4\psi^4(-q^{5})} - \dfrac{\varphi^2(-q) \varphi^2(-q^5)}{8q^4\psi^4(-q^{5})} + \dfrac{2 \varphi(-q)\psi(q^5)} {q \varphi(-q^{10})\psi(-q)},
		\end{align*} 	where $\Omega_1$ and $\Omega_2$ are defined as in \eqref{Omega}.
	\end{theorem}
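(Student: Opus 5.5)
The plan mirrors the proof of Theorem~\ref{TF}. Write
\begin{align*}
\dfrac{1}{\Omega_1^2}+\dfrac{1}{\Omega_2^2}=\left(\dfrac{1}{\Omega_2}-\dfrac{1}{\Omega_1}\right)^2+\dfrac{2}{\Omega_1\Omega_2}.
\end{align*}
The product term is already available. In the proof of Theorem~\ref{TF} we found
\begin{align*}
\Omega_1\Omega_2=\dfrac{q\varphi(-q^{10})\psi(-q)}{\varphi(-q)\psi(q^5)} .
\end{align*}
Hence $2/(\Omega_1\Omega_2)$ is exactly the last term $2\varphi(-q)\psi(q^5)/(q\varphi(-q^{10})\psi(-q))$ of the statement. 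The first three terms of the statement form the perfect square $\bigl(\varphi^2(-q)-\varphi^2(-q^5)\bigr)^2/(16q^4\psi^4(-q^5))$. So everything reduces to the level-ten identity
\begin{align*}
\dfrac{1}{\Omega_2}-\dfrac{1}{\Omega_1}=\dfrac{f(-q^3,-q^7)}{qf(q^2,q^8)}-\dfrac{f(-q,-q^9)}{f(q^4,q^6)}=\pm\dfrac{\varphi^2(-q)-\varphi^2(-q^5)}{4q^2\psi^2(-q^5)}.
\end{align*}
Only the square of this difference enters, so the sign is irrelevant.

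To prove this reduced identity I would imitate Lemma~\ref{LF}, starting from the Lambert series of a squared theta function. Jacobi's two-square formula gives
\begin{align*}
\varphi^2(q)=1+4\sum_{n\ge1}\dfrac{q^n}{1+q^{2n}} .
\end{align*}
Replacing $q$ by $-q$ gives $\varphi^2(-q)$, and $\varphi^2(-q)-\varphi^2(-q^5)$ is then a difference of two such series. Splitting the summation index by residue classes modulo $5$ (as was done with $\psi^2$ in \eqref{3.2a}), the constant terms cancel. I expect the remaining terms to recombine into two bilateral sums of the shape $\sum_{n}z^n/(1-aq^{10n})$, one with $a=-q^4$ and one with $a=-q^2$ (or their reciprocals), for suitable monomials $z$.

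Each bilateral sum is then evaluated by \eqref{1sum2_10}. The parameters $z$ must be chosen so that the numerator theta $f(-az,-q^{10}/az)$ becomes $f(-q,-q^9)$, respectively $f(-q^3,-q^7)$, up to a power of $q$. The denominator $f(-a,-q^{10}/a)$ then becomes $f(q^4,q^6)$, respectively $f(q^2,q^8)$. The remaining common factor $f(-z,-q^{10}/z)$ together with $(q^{10};q^{10})_\infty^3$ should simplify, via Remark~\ref{tf}, to a multiple of $q^{-2}\psi^{-2}(-q^5)$, giving the constant $1/(4q^2\psi^2(-q^5))$.

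The main obstacle is this middle step. The exact $5$-dissection of the Lambert series has to produce precisely the two $_1\psi_1$ sums with matching $z$. In particular, the shifts of index needed to turn one-sided sums into bilateral ones (and the resulting boundary terms) must cancel with the $\varphi^2(-q^5)$ piece, as happened in \eqref{3.2a}. If this bookkeeping fails, I would instead derive the reduced identity directly from the already proven relation $\Gamma_1-\Gamma_2=(\varphi^2(q)+\varphi^2(q^5))/(2\varphi^2(-q^{10}))$. The route would be to change $q\to-q$ and then convert the $f(\pm q^j,\pm q^{10-j})$ quotients via Lemma~\ref{LemmaB} and Lemma~\ref{LemmaA}(i),(ii) with $xy=zw=q^{10}$, which relate $f(q^4,q^6)$, $f(q^2,q^8)$ to $f(\pm q,\pm q^9)$, $f(\pm q^3,\pm q^7)$.
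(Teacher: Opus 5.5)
Your proposal is correct and follows the paper's proof. The paper also uses $1/\Omega_1^2+1/\Omega_2^2=(1/\Omega_1-1/\Omega_2)^2+2/(\Omega_1\Omega_2)$, the product formula for $\Omega_1\Omega_2$ from Theorem~\ref{TF}, and the difference identity $1/\Omega_1-1/\Omega_2=\bigl(\varphi^2(-q)-\varphi^2(-q^5)\bigr)/\bigl(4q^2\psi^2(-q^5)\bigr)$; the only difference is that it quotes this last identity as the case $k=5$ of a general result from \cite{Bhat_2025} instead of proving it.

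Your planned derivation of that identity does go through, with no boundary terms, if you start from the bilateral form $\varphi^2(-q)=2\sum_{n\in\mathbb{Z}}(-q)^n/(1+q^{2n})$. Writing $n=5m+r$, the class $r=0$ is exactly $\varphi^2(-q^5)$, and the classes $r$ and $5-r$ coincide under $m\to-m-1$. Then \eqref{1sum2_10} with $z=-q^5$ and $a=-q^2,-q^4$ gives $\varphi^2(-q)-\varphi^2(-q^5)=4q^2\,\dfrac{f_{10}^3}{\varphi(q^5)}\left(\dfrac{1}{\Omega_1}-\dfrac{1}{\Omega_2}\right)$, and $f_{10}^3/\varphi(q^5)=\psi^2(-q^5)$. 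So your fallback through $\Gamma_1-\Gamma_2$ is not needed.
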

	
	\begin{proof}
		We have 
		\begin{align}\label{O1}
			\dfrac{1}{	\Omega_1^2} + \dfrac{1}{\Omega_2^2} =	\left(\dfrac{1}{\Omega_1}-\dfrac{1}{\Omega_2}\right)^2+ \dfrac{2}{\Omega_1 \Omega_2}.
		\end{align}
		From \cite{Bhat_2025},	we have 
		\begin{align*}
			\sum_{j=1}^{\frac{(k-1)}{2}} \dfrac{(-1)^{j-1}}{q^{j-1}} \dfrac{f(-q^{2j-1}, -q^{2k-2j+1})}{f(q^{k-2j+1}, q^{k+2j-1})} = \dfrac{\varphi^2(-q)-\varphi^2(-q^k)}{4q^{\frac{k-1}{2}} \psi^2(-q^k)},	
		\end{align*} for any $k \in \mathbb{N}.$
		Substituting $k=5$ in the above and using the resultant expression along with expression for $	\Omega_1\Omega_2$  in \eqref{O1}, the result is obtained.
	\end{proof}

	\section{Modular equations of degree five}
	Here, we establish modular equation of degree 5, which is new and devoid of symmetry as compared to modular equations given by Ramanujan.
	\begin{theorem}
		We have the following modular equation of degree 5:
		\begin{align*}
			&m^2 + 1 -2m +8\left(\dfrac{(1-\beta)^5}{(1-\alpha)}\right)^{1/8} - 8(1-\alpha)^{1/2} (1-\beta)^{1/2} \\ &= 	m^2 \alpha + \beta  	 - 	2m^2 \alpha^{1/2} \beta^{1/2}+ 8\left(\dfrac{\alpha \beta^3 (1-\beta)^5}{(1-\alpha)}\right)^{1/8},
		\end{align*} where $\beta$ has degree 5 over $\alpha$ and $m$ is the multiplier. 
	\end{theorem}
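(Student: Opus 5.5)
The plan is to eliminate $\Omega_1$ and $\Omega_2$ between Theorem \ref{TF} and Theorem \ref{T/F}, and then to translate every theta function that remains into $\alpha$, $\beta$ and the multiplier $m$. The elimination uses the identity
\begin{align*}
\frac{1}{\Omega_1^2}+\frac{1}{\Omega_2^2}=\frac{\Omega_1^2+\Omega_2^2}{(\Omega_1\Omega_2)^2}.
\end{align*}
Here $\Omega_1\Omega_2=q\varphi(-q^{10})\psi(-q)/(\varphi(-q)\psi(q^5))$ is already known from the proof of Theorem \ref{TF}. Multiplying the right-hand side of Theorem \ref{T/F} by $(\Omega_1\Omega_2)^2$ and equating it with the right-hand side of Theorem \ref{TF} gives one identity. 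It involves only $\varphi(\pm q)$, $\varphi(\pm q^5)$, $\varphi(-q^{10})$, $\psi(\pm q)$, $\psi(\pm q^5)$, $\psi(q^2)$ and $\psi(q^{10})$. This identity is the theta-function form of the modular equation.

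Next I would use Ramanujan's standard evaluations from Chapter 17 of \cite{Berndt1991}. Write $z_1=\varphi^2(q)$, $z_5=\varphi^2(q^5)$ and $m=z_1/z_5$. Then
\begin{align*}
\varphi(-q)&=\sqrt{z_1}(1-\alpha)^{1/4}, & \varphi(-q^2)&=\sqrt{z_1}(1-\alpha)^{1/8},\\
\psi(q)&=\sqrt{z_1/2}\,(\alpha/q)^{1/8}, & \psi(-q)&=\sqrt{z_1/2}\,(\alpha(1-\alpha)/q)^{1/8},\\
\psi(q^2)&=\tfrac12\sqrt{z_1}\,(\alpha/q)^{1/4}, & &
\end{align*}
and the same formulas hold with $q\to q^5$, $z_1\to z_5$, $\alpha\to\beta$. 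Substituting these, each of the three symmetric-looking groups becomes a simple expression. The $\psi$-group in Theorem \ref{TF} becomes a multiple of $(z_1^2\alpha+z_5^2\beta-2z_1z_5\sqrt{\alpha\beta})/q$. The $\varphi$-group in Theorem \ref{T/F} becomes a multiple of a quadratic in $z_1,z_5$ weighted by powers of $1-\alpha$ and $1-\beta$. The factor $(\Omega_1\Omega_2)^2$ becomes a monomial in $\alpha$, $\beta$, $1-\alpha$, $1-\beta$ and $z_1/z_5$. The cross terms $2\Omega_1\Omega_2$ and $2/(\Omega_1\Omega_2)$ similarly become monomials with eighth-root exponents, which produce the terms $8((1-\beta)^5/(1-\alpha))^{1/8}$ and $8(\alpha\beta^3(1-\beta)^5/(1-\alpha))^{1/8}$ in the statement.

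Finally, I would divide through by $z_5^2$ together with a suitable power of $q$ so that every $q$ cancels. This cancellation is a consistency check, since all identities are modular of the same weight. I would then collect terms to arrive at the stated form, with the $m^2$, $m$ and $1$ terms on one side.

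The main obstacle is bookkeeping in this last step. The exponents of $q$ and the fractional powers of $1-\alpha$ and $1-\beta$ must combine exactly. In particular, the $\varphi(-q^{10})$ factor needs its degree-10 expression in terms of $\beta$, namely $\sqrt{z_5}(1-\beta)^{1/8}$. If the powers of $1-\alpha$ and $1-\beta$ coming from $(\Omega_1\Omega_2)^2$ do not clear cleanly, I would instead rewrite Theorem \ref{T/F} using $\varphi^2(-q)=z_1\sqrt{1-\alpha}$ before multiplying, and then track each monomial separately.
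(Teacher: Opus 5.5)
\textbf{Your elimination step collapses, so this route cannot reach the stated equation.} Put $D=\Omega_1-\Omega_2$ and $P=\Omega_1\Omega_2$. Theorem \ref{TF} is $\Omega_1^2+\Omega_2^2=D^2+2P$. Theorem \ref{T/F} is $\Omega_1^{-2}+\Omega_2^{-2}=(\Omega_1^{-1}-\Omega_2^{-1})^2+2/P$. Multiplying the second by $P^2$ turns $2/P$ into $2P$, which cancels the $2P$ of the first identically. So the cross terms you expected to supply the eighth-root terms disappear. What survives is $P^2(\Omega_1^{-1}-\Omega_2^{-1})^2=D^2$. This is just the square of the tautology $\Omega_1^{-1}-\Omega_2^{-1}=-D/P$, with three substitutions: Lemma \ref{LF}, the $k=5$ case of the identity quoted in the proof of Theorem \ref{T/F}, and the product formula. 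Your parametrizations give
$D=\tfrac12q^{3/4}(m\sqrt\alpha-\sqrt\beta)(\beta(1-\beta))^{-1/4}$,
$\Omega_1^{-1}-\Omega_2^{-1}=\tfrac12q^{-3/4}(m\sqrt{1-\alpha}-\sqrt{1-\beta})(\beta(1-\beta))^{-1/4}$, and
$P=q^{3/2}\bigl(\alpha(1-\beta)/(\beta(1-\alpha))\bigr)^{1/8}$.
Your identity therefore becomes
\[
\bigl(m\sqrt{1-\alpha}-\sqrt{1-\beta}\bigr)^2(\alpha(1-\beta))^{1/4}=\bigl(m\sqrt\alpha-\sqrt\beta\bigr)^2(\beta(1-\alpha))^{1/4}.
\]
This is a true but different degree-five relation. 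It contains no $(m-1)^2$ and no term like $8((1-\beta)^5/(1-\alpha))^{1/8}$.

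\textbf{The missing idea is a bridge from the $\Omega$'s to $\Gamma_1,\Gamma_2$.} Those target terms come from Theorem \ref{TD}. There $\Gamma_1-\Gamma_2=(\varphi^2(q)-\varphi^2(q^5))/(2\varphi^2(-q^{10}))=(m-1)/(2(1-\beta)^{1/4})$. The sign must be minus, as the expansion in Theorem \ref{TD} presupposes; the constant term of $\Gamma_1-\Gamma_2$ is $0$. Also $\Gamma_1\Gamma_2=((1-\beta)/(1-\alpha))^{1/8}$.

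The paper obtains the bridge from Lemma \ref{LemmaA}(iii) with $z=w=q^5$ and $n=1$, using $f(1,q^{10})=2\psi(q^{10})$; this is \eqref{T3.4.1}. Applying it at $(x,y)=(q,q^9)$ and $(q^3,q^7)$ and adding gives
$\Gamma_1^2+\Gamma_2^2-2\varphi^2(-q^5)/\varphi^2(q^5)=4q\psi^2(q^{10})\varphi^{-2}(q^5)(\Omega_1^2+\Omega_2^2)$.
One then substitutes Theorems \ref{TD} and \ref{TF}, parametrizes, and multiplies by $4\sqrt{1-\beta}$. Theorem \ref{T/F} plays no role here; it is used only for the level-twenty Eisenstein identity.

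\textbf{The printed statement is also wrong in two terms.} Carrying the paper's route through yields $-8(1-\beta)$ where the statement has $-8(1-\alpha)^{1/2}(1-\beta)^{1/2}$. This term comes from $2\varphi^2(-q^5)/\varphi^2(q^5)=2\sqrt{1-\beta}$. It also yields $(m\sqrt\alpha-\sqrt\beta)^2$, i.e.\ $-2m\sqrt{\alpha\beta}$ rather than $-2m^2\sqrt{\alpha\beta}$. As printed, the equation fails already at order $q$. With $\alpha=16q+O(q^2)$, $\beta=O(q^5)$ and $m=1+4q+O(q^2)$, the left side is $80q+O(q^2)$ while the right side is $16q+O(q^2)$. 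The corrected form agrees through order $q^3$.
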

	
	\begin{proof}
		Substituting $z=w=q^5$ and $n=1$ in (iii) of Lemma \ref{LemmaA}, we obtain
		\begin{align}\label{T3.4.1}
			\dfrac{f^2(x,y)}{f^2(-x,-y)} - \dfrac{\varphi^2(-q^5)}{\varphi^2(q^5)} = \dfrac{4x \psi^2(xy) f^2(q^5/x,xq^5)}{\varphi^2(q^5) f^2(-x,-y)}.
		\end{align}
		Replacing $(x,y)$ by $(q,q^{9})$ and $(q^3,q^{7})$  in \eqref{T3.4.1} and adding the two  resultant identities, we obtain
		\begin{align*}
			\left(	\Gamma_1^2 + \Gamma_2^2 \right) -2 \dfrac{\varphi^2(-q^5)}{\varphi^2(q^5)} =  4q \dfrac{\psi^2(q^{10})}{\varphi^2(q^5)} \left( 	\Omega_1^2 + \Omega_2^2\right).
		\end{align*}
		Using Theorem \ref{TD} and Theorem \ref{TF} in the expression above, we deduce
		\begin{align*}
			&\dfrac{\varphi^4(q)}{4 \varphi^4(-q^{10})} + \dfrac{\varphi^4(q^5)}{4\varphi^4(-q^{10})} - \dfrac{\varphi^2(q) \varphi^2(q^5)}{2 \varphi^4(-q^{10})}  + \dfrac{2 \psi(q) \psi(-q^5) } {\psi(-q) \psi(q^5) }-2 \dfrac{\varphi^2(-q^5)}{\varphi^2(q^5)} \\
			&=  4q \dfrac{\psi^2(q^{10})}{\varphi^2(q^5)} \left( \dfrac{\psi^4(q^2)}{ \psi^4(-q^{5})} + \dfrac{q^4\psi^4(q^{10})}{\psi^4(-q^{5})} - \dfrac{2q^2\psi^2(q^2) \psi^2(q^{10})}{ \psi^4(-q^{5})}   		+ \dfrac{ 2q\varphi(-q^{10}) \psi(-q) } {\varphi(-q)\psi(q^5) } \right).
		\end{align*}
		Using the definitions of $\psi$ and $\varphi$ in the above, on simplification, the desired result is accomplished. 
	\end{proof}

	\section{Eisenstein series identities of level ten and twenty}
	In this section, we establish the Eisenstein series identities of level 10 and 20,  using the  Bailey's  very-well poised $_6 \psi_6$ summation formula.
	Putting $P=xy$, $Q=R=x$ and $S=T=y$ in \eqref{B6p6}, one can easily deduce
	\begin{align}\label{B6p6*}
		\sum_{k=-\infty}^{\infty} & \dfrac{(\sqrt{xy} q;q)_k (-\sqrt{xy} q;q)_k (x;q)_k^2 (y;q)_k^2}{(\sqrt{xy} ;q)_k (-\sqrt{xy} ;q)_k (xq;q)_k^2 (yq;q)_k^2} q^k \nonumber \\ &=\dfrac{(xyq;q)_\infty (yq/x;q)_\infty (xq/y;q)_\infty (q/xy;q)_\infty (q;q)_\infty^4 }{(yq;q)_\infty^2 (xq;q)_\infty^2 (q/x;q)_\infty^2 (q/y;q)_\infty^2} .
	\end{align}
	From  (4.5) of \cite[p. 197]{Agarwal1996}, one can easily deduce
	\begin{align}\label{RPA}
		\sum_{k=-\infty}^{\infty} &\left[\dfrac{xq^k}{(1-xq^k)^2} - \dfrac{yq^k}{(1-yq^k)^2}\right] \nonumber \\ &= \dfrac{x(1-y/x) (1-xy)}{(1-x)^2 (1-y)^2} \sum_{k=-\infty}^{\infty} \dfrac{(\sqrt{xy} q;q)_k (-\sqrt{xy} q;q)_k (x;q)_k^2 (y;q)_k^2}{(\sqrt{xy} ;q)_k (-\sqrt{xy} ;q)_k (xq;q)_k^2 (yq;q)_k^2} q^k.
	\end{align}
	Using \eqref{RPA} in the above and changing $q$ to $q^{10}$, we deduce
	\begin{align}\label{4.1.1}
		\sum_{n=-\infty}^{\infty} \left[\dfrac{xq^{10n}}{(1-xq^{10n})^2} - \dfrac{yq^{10n}}{(1-yq^{10n})^2}\right] =x \dfrac{f(-xy, -q^{10}/xy) f(-y/x,-xq^{10}/y) (q^{10};q^{10})_\infty^6}{f^2(-x, -q^{10}/x) f^2(-y,-q^{10}/y)}.
	\end{align}
	
	\begin{theorem} 
		The following Eisenstein series identity holds.
		\begin{align*}
			\dfrac{1}{24} \left[-L_1 +L_2 +5L_5 -5L_{10}\right] =& -\dfrac{\varphi^4(q^5)}{2} + \dfrac{\varphi^4(-q^{10})}{4}\left(\dfrac{\varphi^4(q)}{4\varphi^4(-q^{10})} \right. \\ & \left. + \dfrac{\varphi^4(q^5)}{4\varphi^4(-q^{10})} -\dfrac{\varphi^2(q) \varphi^2(q^5)}{4\varphi^4(-q^{10})} +\dfrac{2 \psi(q) \psi(-q^5)}{\psi(-q) \psi(q^5)}\right),
		\end{align*}
		where $L_n$ is the Eisenstein series defined as in \eqref{ESD}.
	\end{theorem}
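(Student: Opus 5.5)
The plan is to specialize \eqref{4.1.1} at the two pairs $(x,y)$ that produce the theta quotients $\Gamma_1^2$ and $\Gamma_2^2$. Adding the two results should make the right side a multiple of $\Gamma_1^2+\Gamma_2^2$, which Theorem \ref{TD} then evaluates. The left side will be identified with a combination of Lambert series, and hence with $L_1,L_2,L_5,L_{10}$.

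\textbf{Step 1: the specializations.} First I take $x=-q$, $y=q$ in \eqref{4.1.1}. Then
\begin{itemize}
\item $xy=-q^2$ and $y/x=-1$,
\item $f(-xy,-q^{10}/xy)=f(q^2,q^8)$,
\item $f(-y/x,-xq^{10}/y)=f(1,q^{10})=2\psi(q^{10})$,
\item the denominator is $f^2(q,q^9)f^2(-q,-q^9)$.
\end{itemize}
So the right side is a multiple of $f(q^2,q^8)\psi(q^{10})/\bigl(f^2(q,q^9)f^2(-q,-q^9)\bigr)$. This does not yet look like $\Gamma_1^2$, so the choice has to be adjusted.

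A better choice is $y=-x$ with $x=q$, and then $x=q^3$. Now $y/x=-1$ again, but the denominator becomes $f^2(-q,-q^9)f^2(q,q^9)$, the same as before. So $\Gamma_1^2$ does not appear directly in either version. Instead I will combine \eqref{4.1.1} with \eqref{T3.4.1}, which expresses $\Gamma_1^2-\varphi^2(-q^5)/\varphi^2(q^5)$ as a product involving $\psi^2(q^{10})$ and $f^2(q^4,q^6)$. Matching theta products after $x=q$, $y=-q$ (and likewise $x=q^3$, $y=-q^3$) should give
\[
\Gamma_i^2=\frac{\varphi^2(-q^5)}{\varphi^2(q^5)}+c\,\frac{\text{(Lambert sum)}_i}{\varphi^4(-q^{10})}
\]
for an explicit constant $c$. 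I would fix $c$ and the product identifications using Lemma \ref{LemmaB}(iii) and Remark \ref{tf}.

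\textbf{Step 2: the left side.} Write
\[
\frac{xq^{10n}}{(1-xq^{10n})^2}=\sum_{m\ge1} m\,x^m q^{10nm}\quad(n\ge0),
\]
and use the symmetric expansion for $n<0$. Summing over $n$ and over $x\in\{\pm q,\pm q^3\}$ with the appropriate signs gives a series $\sum_{N} c(N)q^N$, where $c(N)$ is a weighted divisor sum depending on $N$ modulo $20$. The aim is to recognise this as a combination of $\sum_{m\ge1} mq^{km}/(1-q^{km})$ for $k=1,2,5,10$. The odd residues $1,3,7,9 \pmod{10}$ together are $\{\text{odd}\}\setminus\{5\bmod 10\}$. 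That points to the combination $-L_1+L_2+5L_5-5L_{10}$: the factor $5$ comes from $d\,q^{d\cdot 5}$ rescaling, and the $L_1$ versus $L_2$ split isolates odd divisors. Constant terms from the $n=0$ terms, and from $1$ in each $L_k$, have to balance. I will check the constant and the coefficient of $q$ to pin down normalizations.

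\textbf{Step 3: assembly.} I then substitute Theorem \ref{TD} for $\Gamma_1^2+\Gamma_2^2$. I expect an identity of the shape
\[
\tfrac{1}{24}\bigl[-L_1+L_2+5L_5-5L_{10}\bigr]=A\,\varphi^4(-q^{10})(\Gamma_1^2+\Gamma_2^2)+B\,\varphi^4(q^5)
\]
or similar. Using $\varphi^2(-q^5)\varphi^2(q^5)=\varphi^4(-q^{10})$ converts the leftover $\varphi^2(-q^5)/\varphi^2(q^5)$ terms into the $-\tfrac12\varphi^4(q^5)$ term. Note that the stated formula has $-\tfrac14$ rather than $-\tfrac12$ in front of $\varphi^2(q)\varphi^2(q^5)$, so any discrepancy there must be absorbed in this bookkeeping.

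\textbf{Main obstacle.} The hardest step is the first: finding the right specialization of \eqref{4.1.1} so that its theta-product side is exactly a multiple of $\Gamma_i^2$, or of $\Gamma_i^2$ minus a known term. Failing that, I would try the alternative $x=-q$, $y=-q^9$, and use Lemma \ref{LemmaA}(i),(ii) to convert the resulting products. The Lambert-series bookkeeping in Step 2 is routine but sign-sensitive. I would verify the final identity by comparing the first few $q$-coefficients on both sides.
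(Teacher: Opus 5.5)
Your Step 1, which you yourself flag as the main obstacle, is never resolved, and the specializations you try cannot resolve it.

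\textbf{The missing idea: take $y=-1$, not $y=-x$.} With $(x,y)=(q,-1)$ in \eqref{4.1.1}:
\begin{itemize}
\item $f(-xy,-q^{10}/xy)=f(q,q^9)$;
\item $f(-y/x,-xq^{10}/y)=f(q^{-1},q^{11})=q^{-1}f(q,q^9)$, which cancels the prefactor $x=q$;
\item $f^2(-y,-q^{10}/y)=f^2(1,q^{10})=4\psi^2(q^{10})$.
\end{itemize}
Since $f_{10}^6/\psi^2(q^{10})=\varphi^4(-q^{10})$, the right side of \eqref{4.1.1} is exactly $\frac{\varphi^4(-q^{10})}{4}\Gamma_1^2$. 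Likewise $(q^3,-1)$ gives $\frac{\varphi^4(-q^{10})}{4}\Gamma_2^2$.

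The two left sides add up to $\sum_{m \text{ odd},\,5\nmid m} q^m/(1-q^m)^2+2\sum_{n}q^{10n}/(1+q^{10n})^2$. The second sum, including its constant term $\tfrac14$ from $n=0$, is removed by a third specialization $(x,y)=(q^5,-1)$, which gives $\sum_n\bigl[q^{10n+5}/(1-q^{10n+5})^2+q^{10n}/(1+q^{10n})^2\bigr]=\varphi^4(q^5)/4$. This is where both the term $-\tfrac12\varphi^4(q^5)$ and the weight $5=1+4$ on the sum over $m\equiv 5 \pmod{10}$ come from. Your Step 2 does not supply this: you attribute the factor $5$ to a rescaling, but it actually comes from eliminating that second sum. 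After this, the Lambert-series expansion and Theorem \ref{TD} finish the proof as in the paper.

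\textbf{Your fallback fails.} At $(x,y)=(q,-q)$ the left side of \eqref{4.1.1} is $\sum_n\bigl[u/(1-u)^2+u/(1+u)^2\bigr]$ with $u=q^{10n+1}$. This is an odd function of $q$, and $\varphi^4(-q^{10})$ is even. But $\Gamma_1^2-\varphi^2(-q^5)/\varphi^2(q^5)=4q+8q^2+\cdots$ is not odd, so no constant $c$ makes your displayed formula true.

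The alternative $(x,y)=(-q,-q^9)$ is degenerate. There $xy=q^{10}$, so $f(-xy,-q^{10}/xy)=f(-q^{10},-1)=0$, and the left side also vanishes identically.

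Your displayed shape is in fact attainable with $(x,y)=(q,-q^5)$ and $(q^3,-q^5)$. Then the right side of \eqref{4.1.1} is $qf_{10}^6\Omega_i^2/\varphi^2(q^5)$, and \eqref{T3.4.1} gives $c=4$. Even so, the leftover term would be $\frac{\varphi^4(-q^{10})}{2}\cdot\frac{\varphi^2(-q^5)}{\varphi^2(q^5)}=\frac{\varphi^4(-q^5)}{2}$, not a $\varphi^4(q^5)$ term. You would also still need the specialization $(-1,-q^5)$ to handle $\sum_n q^{10n+5}/(1+q^{10n+5})^2$. So the bookkeeping claimed in your Step 3 is also incorrect.

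\textbf{The coefficient $-\tfrac14$.} No bookkeeping can absorb the coefficient $-\tfrac14$ in front of $\varphi^2(q)\varphi^2(q^5)/\varphi^4(-q^{10})$. The constant term of the left side is $0$, while the right side as printed has constant term $-\tfrac12+\tfrac14\cdot\tfrac94=\tfrac1{16}$. The $-\tfrac14$ is a misprint for $-\tfrac12$, which is what substituting Theorem \ref{TD} actually gives.
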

	
	\begin{proof}
		Replacing $(x,y)$ by $(q,-1)$ and $(q^3,-1)$ in \eqref{4.1.1}, and adding the two  resultant identities and using the definitions of $\Gamma_1$ and $\Gamma_2$, we obtain
		\begin{align}\label{T4.1.1}
			\sum_{n=0}^{\infty} \left[\dfrac{q^{2n+1}}{(1-q^{2n+1})^2} - \dfrac{q^{10n+5}}{(1-q^{10n+5})^2}\right] &+ 2\sum_{n=-\infty}^{\infty} \dfrac{q^{10n}}{(1+q^{10n})^2} \nonumber \\
			&= \dfrac{\varphi^4(-q^{10})}{4} \left(\Gamma_1^2 + \Gamma_2^2\right).
		\end{align}
		Substituting $x=q^5$ and $y=-1$ in \eqref{4.1.1}, we obtain
		\begin{align}\label{T4.1.2}
			\sum_{n=-\infty}^{\infty} \left[\dfrac{q^{10n+5}}{(1-q^{10n+5})^2} + \dfrac{q^{10n}}{(1+q^{10n})^2}\right] =\dfrac{\varphi^4(q^{5})}{4}.
		\end{align}
		Using \eqref{T4.1.2} in \eqref{T4.1.1}, we obtain
		\begin{align*}
			\sum_{n=0}^{\infty} \dfrac{q^{2n+1}}{(1-q^{2n+1})^2} -5 \sum_{n=0}^{\infty} \dfrac{q^{10n+5}}{(1-q^{10n+5})^2}= -\dfrac{\varphi^4(q^5)}{2} + \dfrac{\varphi^4(-q^{10})}{4} \left(\Gamma_1^2 + \Gamma_2^2\right).
		\end{align*}
		Expanding the summation on the left hand side of the expression above and grouping the like terms suitably, we arrive at
		\begin{align*}
			\sum_{n=1}^{\infty} \dfrac{nq^{n}}{(1-q^n)} &- \sum_{n=1}^{\infty} \dfrac{nq^{2n}}{(1-q^{2n})}-5 \sum_{n=1}^{\infty} \dfrac{q^{5n}}{(1-q^{5n})}+ 5\sum_{n=1}^{\infty} \dfrac{q^{10n}}{(1-q^{10n})}\\
			&= -\dfrac{\varphi^4(q^5)}{2} + \dfrac{\varphi^4(-q^{10})}{4} \left(\Gamma_1^2 + \Gamma_2^2\right).
		\end{align*}
		Using the definition of $L_n$ in the above, we obtain
		\begin{align*}
			\dfrac{1}{24} \left[-L_1 +L_2 +5L_5 -5L_{10}\right] = -\dfrac{\varphi^4(q^5)}{2} + \dfrac{\varphi^4(-q^{10})}{4} \left(\Gamma_1^2 + \Gamma_2^2\right).
		\end{align*}
		Using Theorem \ref{TD} in the expression atop, the desired result is accomplished on simplification.
	\end{proof}
	
	\begin{theorem}
		The following Eisenstein series identity holds.
		\begin{align*}
			\dfrac{2}{3} \left[L_2 -4L_4 -5L_{10} +20L_{20}\right] =& 8 -8\varphi^4(-q^5) -16q^4 \psi^4(-q^5) \left(\dfrac{\varphi^4(-q)}{16q^4 \psi^4(-q^5)} \right. \\ & \left.+ \dfrac{\varphi^4(-q^5)}{\psi^4(-q^5)} - \dfrac{\varphi^2(-q) \varphi^2(-q^5)}{8q^4 \psi^4(-q^5)} + \dfrac{2 \varphi(-q) \psi(q^5)}{q\varphi(-q^{10}) \psi(-q) }\right),
		\end{align*}
		where $L_n$ is the Eisenstein series defined as in \eqref{ESD}.
	\end{theorem}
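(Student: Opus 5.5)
The plan mirrors the proof of the preceding (level ten) Eisenstein series identity, with $\Gamma_1,\Gamma_2$ replaced by $1/\Omega_1,1/\Omega_2$ and Theorem \ref{TD} replaced by Theorem \ref{T/F}. The key observation is that \eqref{4.1.1} produces exactly $1/\Omega_1^2$ and $1/\Omega_2^2$ for suitable choices of $(x,y)$.

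\textbf{Step 1: produce $1/\Omega_1^2$ and $1/\Omega_2^2$ from \eqref{4.1.1}.}
\begin{itemize}
\item Take $(x,y)=(-q^4,-q^5)$. Then $xy=q^9$ and $y/x=q$, so the numerator on the right of \eqref{4.1.1} is $f(-q^9,-q)f(-q,-q^9)=f^2(-q,-q^9)$. The denominator is $f^2(q^4,q^6)\varphi^2(q^5)$.
\item Take $(x,y)=(-q^2,-q^5)$. Then $xy=q^7$ and $y/x=q^3$, giving $f^2(-q^3,-q^7)/\bigl(f^2(q^2,q^8)\varphi^2(q^5)\bigr)$.
\end{itemize}
Since $1/\Omega_2^2=f^2(-q^3,-q^7)/\bigl(q^2f^2(q^2,q^8)\bigr)$, both right-hand sides carry the common factor $-q^4(q^{10};q^{10})_\infty^6/\varphi^2(q^5)$. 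Adding the two specializations therefore gives
\begin{align*}
\sum_{n=-\infty}^{\infty}\left[-\dfrac{q^{10n+4}}{(1+q^{10n+4})^2}-\dfrac{q^{10n+2}}{(1+q^{10n+2})^2}+\dfrac{2q^{10n+5}}{(1+q^{10n+5})^2}\right]
= -\dfrac{q^4 f_{10}^6}{\varphi^2(q^5)}\left(\dfrac{1}{\Omega_1^2}+\dfrac{1}{\Omega_2^2}\right).
\end{align*}
By Remark \ref{tf}, the prefactor $f_{10}^6/\varphi^2(q^5)$ is expressible in terms of $\psi^4(-q^5)$ up to a constant. This is what produces the factor $-16q^4\psi^4(-q^5)$ in the statement.

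\textbf{Step 2: handle the leftover $q^{10n+5}$ terms.} I would specialize \eqref{4.1.1} once more, choosing a pair with $y=-q^5$ or $x=-q^5$ (for example $(x,y)=(-q^5,-1)$ after normalizing the theta quotient). The aim is to evaluate $\sum_n q^{10n+5}/(1+q^{10n+5})^2$, together with the even-index sums that complete the residue classes mod $10$, in closed form. Equivalently, I can invoke the classical Lambert series for $\varphi^4$ with $q$ replaced by $-q^5$, whose constant term is $1$. Substituting this closed form into Step 1 should generate the terms $8-8\varphi^4(-q^5)$.

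\textbf{Step 3: convert to $L_n$ and conclude.} I expand each sum using
\begin{align*}
\dfrac{u}{(1+u)^2}=\sum_{k\ge1}(-1)^{k-1}ku^k.
\end{align*}
Then I sort the exponents by their residue classes modulo $2,4,10,20$. The signs $(-1)^{k-1}$ and the restriction to the classes $\pm2,\pm4 \pmod{10}$ should combine, via inclusion--exclusion over divisibility by $2$ and $5$, into
\begin{align*}
\sum_{n\ge1}\dfrac{nq^{n}}{1-q^{n}}
\end{align*}
evaluated at $q^2,q^4,q^{10},q^{20}$ with coefficients proportional to $1,-4,-5,20$. Applying \eqref{ESD} then yields $\tfrac23\bigl[L_2-4L_4-5L_{10}+20L_{20}\bigr]$. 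Finally, I substitute Theorem \ref{T/F} for $1/\Omega_1^2+1/\Omega_2^2$ to reach the stated form.

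\textbf{Main obstacle.} I expect the bookkeeping in Step 3, together with the auxiliary evaluation in Step 2, to be the hardest part. The sums run over the classes $\pm2,\pm4 \pmod{10}$ with alternating signs, and the missing classes $0,\pm1,\pm3,5 \pmod{10}$ must be filled in by auxiliary identities for the combination to collapse to just four $L_n$. My first approach would be to write everything as $\sum_N c(N)\,\sigma$-type coefficients and verify the resulting coefficient pattern on the first few powers of $q$ before fixing the constants $8$ and $-8$.
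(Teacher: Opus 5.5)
Your route is the paper's route: the specializations $(x,y)=(-q^2,-q^5)$ and $(-q^4,-q^5)$ of \eqref{4.1.1}, an auxiliary specialization at $(-1,-q^5)$ (your $(-q^5,-1)$ differs only by an overall sign), the expansion \eqref{T4.2.4}, and then Theorem~\ref{T/F}. Your Step~1 is correct. In fact $f_{10}^6/\varphi^2(q^5)=\psi^4(-q^5)$ exactly, and both $1/\Omega_1^2$ and $1/\Omega_2^2$ enter with a plus sign, so the minus printed in \eqref{T4.2.1} is a misprint.

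The gap is your claim that Steps~2--3 ``should generate $8-8\varphi^4(-q^5)$'' and reach ``the stated form''. No computation can do that, because the statement is false. At $q=0$ the left side is $\tfrac23(1-4-5+20)=8$. On the right, $8-8\varphi^4(-q^5)$ vanishes at $q=0$. The term $-16q^4\psi^4(-q^5)$ times the bracket equals $-\varphi^4(-q)-16q^4\varphi^4(-q^5)+2\varphi^2(-q)\varphi^2(-q^5)+O(q^3)$, which tends to $1$. So the right side is $1$ at $q=0$.

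Carried out correctly, your plan proves a different identity. The $n=0$ term $-\tfrac14$ already shows that the auxiliary evaluation is
\[
\sum_{n=-\infty}^{\infty}\left[\frac{q^{10n+5}}{(1+q^{10n+5})^2}-\frac{q^{10n}}{(1+q^{10n})^2}\right]=-\frac{\varphi^4(-q^5)}{4},
\]
which has the opposite sign to \eqref{T4.2.2}. Feeding this into Step~1 gives
\[
-\sum_{n\ge1}\frac{q^{2n}}{(1+q^{2n})^2}+5\sum_{n\ge1}\frac{q^{10n}}{(1+q^{10n})^2}=\frac{\varphi^4(-q^5)-1}{2}-q^4\psi^4(-q^5)\left(\frac{1}{\Omega_1^2}+\frac{1}{\Omega_2^2}\right).
\]
Since $\sum_{n\ge1}q^n/(1+q^n)^2=(4L_2-L_1-3)/24$, the left side equals $\tfrac1{24}[L_2-4L_4-5L_{10}+20L_{20}]-\tfrac12$. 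The constant $\tfrac12$ from the $L_n$ therefore cancels the $-\tfrac12$, and what is actually provable is
\[
\frac23\left[L_2-4L_4-5L_{10}+20L_{20}\right]=8\varphi^4(-q^5)-16q^4\psi^4(-q^5)\left(\frac{1}{\Omega_1^2}+\frac{1}{\Omega_2^2}\right).
\]
By Theorem~\ref{T/F}, the last product equals $\varphi^4(-q)+\varphi^4(-q^5)-2\varphi^2(-q)\varphi^2(-q^5)+32q^3\psi^4(-q^5)\varphi(-q)\psi(q^5)/(\varphi(-q^{10})\psi(-q))$. In particular, the second bracket term must be $\varphi^4(-q^5)/(16q^4\psi^4(-q^5))$, not $\varphi^4(-q^5)/\psi^4(-q^5)$ as printed. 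Both sides of this corrected identity begin $8-16q^2+16q^4-64q^6+\cdots$.

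The printed $8-8\varphi^4(-q^5)$ comes from two places in the paper's proof. One is the sign in \eqref{T4.2.2}. The other is the right side of \eqref{T4.2.3}, which should read $\tfrac12(\varphi^4(-q^5)-1)-q^4\psi^4(-q^5)(\Omega_1^{-2}+\Omega_2^{-2})$.

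The coefficient check you planned would have caught all of this at $q^0$. Your argument cannot end at the stated identity; its correct conclusion is the corrected identity above.
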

	
	\begin{proof}
		Replacing $(x,y)$ by $(-q^2,-q^5)$ and $(-q^4,-q^5)$ in \eqref{4.1.1}, and adding the  four resultant identities and using the definition of $\Omega_1$ and $\Omega_2$, we obtain
		\begin{align}\label{T4.2.1}
			\sum_{n=0}^{\infty} \left[-\dfrac{q^{2n+2}}{(1+q^{2n+2})^2} + \dfrac{q^{10n+10}}{(1+q^{10n+10})^2}\right] &+ 2\sum_{n=-\infty}^{\infty} \dfrac{q^{10n+5}}{(1+q^{10n+5})^2} \nonumber \\
			&=-q^4 \psi^4(-q^{5}) \left(\dfrac{1}{\Omega_1^2} -\dfrac{1}{\Omega_2^2} \right).
		\end{align}
		Substituting $x=-1$ and $y=-q^5$ in \eqref{4.1.1}, we obtain
		\begin{align}\label{T4.2.2}
			\sum_{n=-\infty}^{\infty} \left[-\dfrac{q^{10n}}{(1+q^{10n})^2} + \dfrac{q^{10n+5}}{(1+q^{10n+5})^2}\right] =\dfrac{\varphi^4(-q^{5})}{4}.
		\end{align}
		Using \eqref{T4.2.2} in \eqref{T4.2.1}, we obtain
		\begin{align}\label{T4.2.3}
			-\sum_{n=0}^{\infty} \dfrac{q^{2n+2}}{(1+q^{2n+2})^2} &+5 \sum_{n=0}^{\infty} \dfrac{q^{10n+10}}{(1+q^{10n+10})^2} \nonumber\\
			&= -\dfrac{\varphi^4(-q^5)}{2} -q^4 \psi^4(-q^5)  \left(\dfrac{1}{\Omega_1^2} +\dfrac{1}{\Omega_2^2} \right).
		\end{align}
		We know that
		\begin{align}\label{T4.2.4}
			\sum_{n=1}^{\infty} \dfrac{q^n}{(1+q^n)^2} = \sum_{n=1}^{\infty} \dfrac{nq^n}{(1-q^n)} - 4 \sum_{n=1}^{\infty} \dfrac{nq^{2n}}{(1-q^{2n})}. 
		\end{align}
		Expanding the summation on the left hand side of \eqref{T4.2.3}, grouping the like terms suitably and using \eqref{T4.2.4}, we obtain
		\begin{align*}
			-\sum_{n=1}^{\infty} \dfrac{nq^{2n}}{(1-q^{2n})} &+ 4\sum_{n=1}^{\infty} \dfrac{nq^{4n}}{(1-q^{4n})}+9 \sum_{n=1}^{\infty} \dfrac{q^{10n}}{(1-q^{10n})}- 20\sum_{n=1}^{\infty} \dfrac{q^{20n}}{(1-q^{20n})}\\
			&= -\dfrac{\varphi^4(-q^5)}{2} -q^4 \psi^4(-q^5)  \left(\dfrac{1}{\Omega_1^2} +\dfrac{1}{\Omega_2^2} \right).
		\end{align*}
		Using the definition of $L_n$ in the above, we obtain
		\begin{align*}
			\dfrac{1}{24} \left[L_2 -4L_4 -5L_{10} +20L_{20}\right] =\dfrac{1}{2}-\dfrac{\varphi^4(-q^5)}{2}  -q^4 \psi^4(-q^5)  \left(\dfrac{1}{\Omega_1^2} +\dfrac{1}{\Omega_2^2} \right).
		\end{align*}
		Using Theorem \ref{T/F} in the expression atop, the desired result is accomplished on simplification.
	\end{proof}
	
	\section{Conclusion}
	We obtained theta function identities of level ten using Ramanujan's $_1 \psi_1$ summation formula. These identities are then utilized to establish a modular equation of degree five. Further, we established Eisenstein series identities of level ten and twenty using Bailey's very-well poised $_6 \psi_6$ summation formula.

	\section*{Acknowledgements}
	The first author acknowledges the support of DST-INSPIRE, Department of Science and Technology, Government of India, India for providing INSPIRE fellowship [DST/INSPIRE/03/2022/004970].
	%%%%%%%%%%%%%%%%%%%%%%%%%%%%%%%%%%%%%%%%%%%%%%%%%%%%%%%%%%%%%%%%

\end{document}